\documentclass[11pt]{article}

\usepackage{amsmath,amsfonts,amssymb,thc,dsfont,graphicx,a4wide,enumerate,tikz}

\theoremstyle{change}
\theorembodyfont{\itshape}
\newtheorem{theorem}{\bf Theorem}[section]
\newtheorem{proposition}[theorem]{\bf Proposition}
\newtheorem{lemma}[theorem]{\bf Lemma}
\newtheorem{corollary}[theorem]{\bf Corollary}

\theorembodyfont{\rm}
\newtheorem{definition}[theorem]{\bf Definition}
\newtheorem{remark}[theorem]{\bf Remark}

\newtheorem{example}[theorem]{\bf Example}

\newcommand{\conver}{\mathop{\longrightarrow}}
\newcommand{\intau}{\ \conver_{\tau}\ }
\newcommand{\ines}{\ \conver_{S}\ }
\newcommand{\intauw}{\ \conver_{\tau_w}\ }
\newcommand{\intaues}{\ \conver_{\tau_s}\ }

\newcommand{\notintaues}{\ \not\hspace{-3mm}{\conver_{\tau_s}}\ }
\newcommand{\notintau}{\ \not\hspace{-3mm}{\conver_{\tau}}\ }

\newcommand{\cL}{{\mathcal L}}

\newcommand{\cX}{{\mathcal X}}

\newcommand{\cC}{{\mathcal C}}

\newcommand{\cK}{{\mathcal K}}

\newcommand{\GN}{{\mathds{N}}}
\newcommand{\GR}{{\mathds{R}}}

\newcommand{\GI}{{\mathds{I}}}

\newcommand{\GH}{{\mathds{H}}}

\newcommand{\GD}{\mathds{D}}
\newcommand{\GV}{\mathds{V}}

\newcommand{\bP}{{\mathbb{P}}}

\newcommand{\bV}{\mathbb{V}}

\newcommand{\bproof}{\noindent{\sc Proof.}}
\newcommand{\eproof}{$\Box$}

\begin{document}
	\title{C{\`a}dl{\`a}guity and compactness in submetric spaces}

	\author{Adam Jakubowski\footnote{E-mail: adjakubo@mat.umk.pl}  \\[3mm]
		Nicolaus Copernicus University, Toru\'n, Poland
	}

	\date{}
	
	\maketitle
	
	\begin{abstract}
		A submetric space is a topological space equipped with a continuous metric that generates a metric topology weaker than the original one (e.g., a separable Hilbert space with the weak topology). In this paper, we systematically investigate the compactness-related properties of submetric spaces, which make this class of topological spaces a particularly suitable tool for probabilistic applications. As an illustration, we discuss in detail the compactness of the closure of a c\`adl\`ag trajectory taking values in a submetric space.
	\end{abstract}

	\noindent {\em Keywords:}
	submetric space; c\`adl\`ag trajectory; relative compactness; sequential topology; $S$ topology.
	
	\noindent{\em MSClassification 2020:} 
	60F17; 60G17; 54D30; 54D55; 54B99.

\tikzset{every picture/.style={line width=0.75pt}}

\section{Introduction}

In the theory of stochastic processes with values in topological spaces a special place is occupied by the so-called compact containment condition (CCC). Let $\{X_t\}_{t\in[0,T]}$ be a stochastic process taking values in a topological space $(\cX,\tau)$. We say that $\{X_t\}$ satisfies the CCC, if for each $\varepsilon > 0$ one can find a \emph{$\tau$-compact} subset $K_{\varepsilon}$ of $\cX$ such that
\begin{equation}\label{eq:ccc}
	\bP\big( X_t \in K_{\varepsilon}, t\in [0,T]\big) > 1-\varepsilon.
\end{equation}  

If CCC holds uniformly for a family of stochastic processes, then it is often possible to reduce problems of uniform tightness of this family to the uniform tightness of real-valued stochastic processes. Theorem 3.1 in \cite{Jak86} may serve as a commonly used example, where processes with c\`adl\`ag (i.e., right-continuous and admitting left limits at all $t \in (0,T]$) trajectories are considered.

It is obvious that CCC may hold only if a single trajectory can be included in a compact set. Recently Janson \cite{Jan26} obtained a general result of this type. He proved that in a regular topological space $(\cX,\tau)$, for every c\`adl\`ag function $x: [0,T] \to  \cX$ the  set  $ K_x = \{ x(t)\, ; \, t\in [0,T]\} \cup \{ x(t-)\, ; \, t\in (0,T]\}$ is compact. Moreover, by providing a counterexample, he showed that this property is not valid in general Hausdorff spaces  

The purpose of this note is to discuss an analogous question in submetric spaces, a class of topological spaces particularly suitable for the needs of probability theory. 

In Section \ref{Sec:SS} we define submetric spaces and describe their position in the hierarchy of topological spaces. Section \ref{Sec:useful} presents several properties of submetric spaces that illustrate their usefulness in probability theory and stochastic processes; here, we also refer to the expository paper \cite{Jak23}. In Section \ref{Sec:relcomp}, we discuss the relationship  between c\`adl\`aguity and relative compactness in submetric spaces. Finally, Section \ref{Sec:Hausdorff} covers so-called reversibly c\`adl\`ag functions in general Hausdorff spaces.

The paper is intended for readers with a background in probability theory. Consequently, all topological results are either provided with detailed proofs or carefully referenced to the literature.

\section{Submetric spaces}\label{Sec:SS}
\begin{definition}
	If $\big(\cX,\tau\big)$ is a topological space and $d$ is a metric on $\cX$ such that the metric topology $\tau_d$ generated by $d$ is contained in $\tau$, then we shall say that the metric $d$ is compatible with $\tau$.
\end{definition}
It is not difficult to show that the inclusion $\tau_d \subset \tau$ holds if, and only if, for each $x_0 \in \cX$ the function $\cX \ni x \mapsto d(x,x_0)\in \GR^+$ is $\tau$-continuous.
\begin{definition}
	A topological space  $\big(\cX,\tau\big)$ is called {\em submetric}, if there exists a metric $d$ compatible with $\tau$. 
\end{definition}
It should be stressed that in general $\tau_d \subsetneq\tau$  i.e. $\tau_d$ is essentially weaker than $\tau$.

The name ``submetric" is taken from \cite{Gru84}, where  a topological characterization of submetric spaces - how to get rid of the metric in the definition - was also given.  
Our interest in submetric spaces is of different nature - we exhibit the useful consequences of the existence of a~continuous metric.

The class of submetric spaces is quite large. Beyond metric spaces it contains, for example, spaces $(\cX,\tau)$, on which there exists a sequence  $\{f_i\}_{i\in\GN}$ of $\tau$-continuous functions separating the points in $\cX$ (i.e. if $f_i(x) = f_i(y)$, $i\in\GN$, then $x=y$).  If such a family is given then  
\begin{equation}\label{eq:2} \cX \times \cX \ni (x,y) \mapsto d(x,y) = \sum_{i=1}^{\infty} \frac{1}{2^i} \frac{|f_i(x) - f_i(y)|}{1 + |f_i(x) - f_i(y)|}.
\end{equation}
is a {\em continuous} metric on $\cX$. It follows that many weak topologies on linear topological spaces are submetric. A separable Hilbert space equipped with the weak topology can serve here as the prime example (see Example \ref{Ex:Hilb} below).

The position of submetric spaces among other general classes of topological spaces may be described by the following diagram, where all inclusions are {\em strict}.
\begin{center}
	\begin{tikzpicture}[x=0.75pt,y=0.75pt,yscale=-1,xscale=1]

		\draw (70,125) node [anchor=north west][inner sep=0.75pt]   [align=left] {MS};
		\draw (98,125) node [anchor=north west][inner sep=0.75pt]    {$\subset $};
		\node[draw,circle,inner sep=1pt,line width=0.25pt] at (105,145)  {\footnotesize $1$};
		\draw (120,125) node [anchor=north west][inner sep=0.75pt]   [align=left] {SS};
		\draw (140,125) node [anchor=north west][inner sep=0.75pt]    {$\cap $};
		\draw (155,125) node [anchor=north west][inner sep=0.75pt]   [align=left] {CR};
		\node[draw,circle,inner sep=1pt,line width=0.25pt] at (190,107)  {\footnotesize $2$};
		
		\draw (180,122) node [anchor=north west][inner sep=0.75pt]  [rotate=30]  {$\subset $};
		\draw (185,130) node [anchor=north west][inner sep=0.75pt]  [rotate=-30]  {$\subset $};
		
		\node[draw,circle,inner sep=1pt,line width=0.25pt] at (190,155)  {\footnotesize $3$};
		\draw (200,112) node [anchor=north west][inner sep=0.75pt]   [align=left] {SS};
		\draw (200,138) node [anchor=north west][inner sep=0.75pt]   [align=left] {CR};
		\node[draw,circle,inner sep=1pt,line width=0.25pt] at (233,155)  {\footnotesize $5$};
		\draw (224,137) node [anchor=north west][inner sep=0.75pt]  [rotate=30]  {$\subset $};
		\draw (230,113) node [anchor=north west][inner sep=0.75pt]  [rotate=-30]  {$\subset $};
		
		\node[draw,circle,inner sep=1pt,line width=0.25pt] at (233,107)  {\footnotesize $4$};
		\draw (247,125) node [anchor=north west][inner sep=0.75pt]   [align=left] {HS};
	\end{tikzpicture}
\end{center}
Here MS means {\em metric spaces}, SS - {\em submetric spaces}, CR - {\em completely regular spaces}, and HS - {\em Hausdorff spaces}.
\begin{example}[Weak topology on a Hilbert space]\label{Ex:Hilb}
	Let $\GH$ be a separable infinite dimensional (real) Hilbert space with the inner product $\langle \cdot,\cdot\rangle$. By the weak topology $\tau_w$ on $\GH$ we mean the topology generated by all linear functionals $\{ \langle\cdot, y\rangle\,;\, y \in \GH\}$ (i.e. the coarsest topology $\tau$ such that all these functionals are $\tau$-continuous). The basis of neighborhoods of $0$ in $\tau_w$ is uncountable and so the weak topology is not metrizable (see e.g. \cite[Theorem 1, p.40]{Jar81}). On the other hand, let us take an orthonormal basis $\{e_i\}_{i\in\GN}$ in $\GH$  and define
	\[ \GH \times \GH \ni (x,y) \mapsto d(x,y) = \sum_{i=1}^{\infty} \frac{1}{2^i} \frac{|\langle x-y, e_i\rangle|}{1 + |\langle x-y, e_i\rangle|}.\]
	It is a continuous metric on $\GH$. Hence $(\GH,\tau_w)$ is a submetric space. Being a linear topological space it is also a completely regular space  \cite[p.16]{Sch71}. Recall that a topological space $(\cX,\tau)$ is {\em completely regular} if for each $\tau$-closed set $F$ and each point $x_0\not\in F$ there exists a continuous function $f : \cX \to [0,1]$ such that $f(x_0) = 1$ and $f(x) = 0$ if $x \in F$. We conclude that the inclusion no. 1 in the above diagram is strict.
\end{example}

\begin{example}[Smirnov space]\label{Ex:Smir}
	This example was constructed by Smirnov in \cite[Example 1, p. 107]{Smir51}. We shall follow the formalism of \cite{Eng89}. Let $\cX = \GR^1$ and let $Z$ be the set of reciprocals of all positive integers. Consider the system of neighborhoods of $x\in \GR^1$ defined by
	\begin{equation}
		B(x) = \begin{cases} \{ (x-1/i,x+1/i)\,;\, i\in \GN\},& \text{ if $x \neq 0$}, \\
			\{ (x-1/i,x+1/i)\setminus Z\,;\, i\in \GN\},& \text{ if $x = 0$.}
		\end{cases}
	\end{equation}
	Then it was proved in \cite[Example 1.5.6, p. 39]{Eng89} that $\{B(x)\}_{x\in\GR^1}$ is a basis for some topology $\tau$ (the ``Smirnov topology'') and the space $(\cX,\tau)$ is not a regular space (hence also not a completely regular topological space).
	
	On the other hand, any interval $(-\varepsilon,+\delta)$ contains some  set $(x-1/i,x+1/i)\setminus Z$ (but not conversely), hence $\tau$ is {\em strictly} stronger than the usual - metric - topology of $\GR^1$. It follows that the Smirnov space is a submetric space. This proves that the inclusion no.2 in the diagram is strict. 
	
\end{example}

\begin{example}[Tychonoff cube]\label{Ex:Tych}
	For our purposes it is enough to consider the Tychonoff cube of the weight of continuum, e.g. the Cartesian product $[0,1]^{[0,1]}$. It is a completely regular topological space by \cite[Theorem 2.3.11]{Eng89} and is not metrizable, as the basis at every $x \in [0,1]^{[0,1]}$ is uncountable. But this is not enough for our purposes. We want to show that the Tychonoff cube is not {\em submetrizable}. Assume there exists a continuous (with respect to the product topology $\tau_{\Pi}$) metric $d$ on $[0,1]^{[0,1]}$. Then the identity map $I : \big([0,1]^{[0,1]},\tau_{\Pi}\big) \to \big([0,1]^{[0,1]},\tau_d\big)$ is a continuous and one-to-one map of the compact space $\big([0,1]^{[0,1]},\tau_{\Pi}\big)$ onto the Hausdorff space $\big([0,1]^{[0,1]},\tau_d\big)$. By \cite[Theorem 3.1.13]{Eng89} the map $I$ is a homeomorphism, hence $\big([0,1]^{[0,1]},\tau_{\Pi}\big)$ is metrizable. A contradiction.  This proves that the inclusion no. 3 is strict.
\end{example}
The inclusions no. 4 and no. 5 are obvious. 

\section{Which compactness properties render submetric spaces useful in probability theory?}\label{Sec:useful}

Before presenting the results, let us first establish  the core terminology. A subset $K$ of a Hausdorff space $(\cX,\tau)$ is said to be:
\begin{description}
	\item{\bf compact}, if for arbitrary family $U_i,\ i\in \GI$, of $\tau$-open sets such that $K \subset \cup_{i\in\GI} U_i$ there exists a finite subset $\GI_0 \subset \GI$ such that $K \subset \cup_{i\in\GI_0} U_i$.
	\item{\bf sequentially compact}, if in every sequence $\{x_n\} \subset K$ one can find a subsequence $\{x_{n_k}\}$ which is $\tau$-convergent to a limit $x_0$ belonging to $K$: 
	\[x_{n_k} \intau x_0 \in K.\]
	\item{\bf relatively compact}, if in every sequence $\{x_n\} \subset K$ one can find a subsequence $\{x_{n_k}\}$ which is $\tau$-convergent (to a limit $x_0$ which need not belong to $K$):
	\[x_{n_k} \intau x_0 \in \cX.\]
	\item{\bf conditionally compact}, if its $\tau$-closure is $\tau$-compact.
\end{description}
It is well-known that in metric spaces the four types of compactness-like notions reduce to two: 
\begin{enumerate}
	\item compactness and sequential compactness coincide;
	\item relative compactness and  conditional compactness coincide.
\end{enumerate} 

\begin{theorem}
	Compact subsets of a submetric space are $G_{\delta}$ sets.
\end{theorem}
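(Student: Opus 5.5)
Let $(\cX,\tau)$ be submetric with a compatible metric $d$, and let $K\subset\cX$ be $\tau$-compact. The plan is to write $K$ as the intersection of the countably many $d$-neighbourhoods
\[
U_n=\{x\in\cX\,;\, d(x,K)<1/n\},\qquad n\in\GN,
\]
where $d(x,K)=\inf_{y\in K}d(x,y)$. We then check that each $U_n$ is $\tau$-open and that $\bigcap_n U_n=K$.

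\emph{Openness.} The function $x\mapsto d(x,K)$ is $1$-Lipschitz with respect to $d$, so it is $\tau_d$-continuous. Since $\tau_d\subset\tau$, it is also $\tau$-continuous. Hence $U_n$ is $\tau_d$-open and therefore $\tau$-open.

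\emph{Closedness of $K$ in $\tau_d$.} The identity map $(\cX,\tau)\to(\cX,\tau_d)$ is continuous because $\tau_d\subset\tau$. Continuous images of compact sets are compact, so $K$ is $\tau_d$-compact. A compact subset of the Hausdorff (indeed metric) space $(\cX,\tau_d)$ is $\tau_d$-closed. It follows that $d(x,K)=0$ if and only if $x\in K$.

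\emph{Intersection.} Since $\bigcap_n U_n=\{x\,;\,d(x,K)=0\}$, the previous step gives $\bigcap_n U_n=K$. So $K$ is a countable intersection of $\tau$-open sets, that is, a $G_\delta$ set.

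The main point to handle carefully is the second step. It transfers compactness from $\tau$ to the weaker topology $\tau_d$, and we need closedness of $K$ in $\tau_d$, not merely in $\tau$. Closedness in $\tau$ alone would not give $d(x,K)>0$ for $x\notin K$. The transfer works precisely because a continuous metric exists, and no regularity assumption on $\tau$ is needed.
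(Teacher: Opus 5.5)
Your proof is correct and is the paper's argument: $K$ is $\tau_d$-compact, hence $\tau_d$-closed, hence $K=\bigcap_n\{x\,;\,d(x,K)<1/n\}$. The only difference is that you spell out why these sets are $\tau$-open and why $\tau_d$-closedness is what forces $d(x,K)>0$ off $K$; the paper leaves both points implicit.
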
  
\bproof
	Let $K \subset \cX$ be $\tau$-compact and let $d$ be a metric on $\cX$, compatible with $\tau$. Then $K$ is also $\tau_d$-compact, hence $\tau_d$-closed and therefore 
	\[ K = \bigcap_{n=1}^{\infty} \{ x\in \cX\,;\, d(x, K) < (1/n)\}.\]  
\eproof

\begin{theorem}\label{thm_compmetr}
	Compact subsets of a submetric space are metrizable. 
\end{theorem}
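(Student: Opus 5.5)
The idea is to use the argument already given in Example \ref{Ex:Tych}, but restricted to a compact subset instead of the whole space. Let $K \subset \cX$ be $\tau$-compact and let $d$ be a metric compatible with $\tau$. Consider the identity map
\[ I : \big(K, \tau|_K\big) \to \big(K, \tau_d|_K\big). \]
We will show that $I$ is a homeomorphism. Then the subspace topology $\tau|_K$ coincides with the topology induced on $K$ by the metric $d$ restricted to $K\times K$, and so $K$ is metrizable.

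Three facts are needed, in this order. First, $I$ is continuous: since $\tau_d \subset \tau$, every set of the form $U \cap K$ with $U \in \tau_d$ is also relatively $\tau$-open. Second, $(K,\tau|_K)$ is compact; compactness is intrinsic, so $\tau$-compactness of $K$ as a subset is the same as compactness of $K$ in the subspace topology. Third, $(K,\tau_d|_K)$ is Hausdorff, because it is a metric space. We should also note that the subspace topology $\tau_d|_K$ equals the metric topology of $(K, d|_{K\times K})$. This holds because balls in $K$ are traces of balls in $\cX$.

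With these facts in place, we can apply \cite[Theorem 3.1.13]{Eng89}: a continuous bijection from a compact space onto a Hausdorff space is a homeomorphism. For the reader's convenience I would also include the short direct argument. If $F \subset K$ is $\tau|_K$-closed, then $F$ is compact in $\tau|_K$. Its continuous image $I(F) = F$ is therefore compact in $\tau_d|_K$, and hence closed there because that space is Hausdorff. So $I^{-1}$ is continuous.

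There is no real obstacle in this proof. The only point that needs some care is keeping track of subspace versus ambient topologies, in particular checking that $\tau_d|_K$ is indeed the metric topology of $d|_{K\times K}$. Once that is clear, the result follows at once. As a by-product, the compact metric space $(K,d)$ is separable, and on $K$ sequential compactness and compactness agree.
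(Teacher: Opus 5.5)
Your proposal is correct and follows the paper's own route: the identity map from $(K,\tau|_K)$ onto $(K,\tau_d|_K)$ is a continuous bijection from a compact space onto a Hausdorff space, hence a homeomorphism by \cite[Theorem 3.1.13]{Eng89}, exactly as in Example \ref{Ex:Tych}. Your additional checks, namely that $\tau_d|_K$ is the metric topology of $d|_{K\times K}$ and the direct closed-map argument, only spell out details the paper leaves implicit.
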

\bproof
	Let $(\cX,\tau)$ be a submetric space. We have to show that if $K$ is $\tau$-compact, then $\tau$ on $K$ is generated by a metric. We have an obvious candidate for this metric: let $\tau_{d}$ be the metric topology coarser than $\tau$. Then $\tau_{d}$ restricted to a $\tau$-compact set $K$ is a Hausdorff topology coarser than $\tau$. Repeating the reasoning from Example \ref{Ex:Tych} (in fact - applying  \cite[Theorem 3.1.13]{Eng89}) we obtain that  $\tau$ and $\tau_{d}$ coincide on $K$.
\eproof

\begin{remark}
	Note that the metric topology on a $\tau$-compact set $K$ is {\em independent} of the chosen particular metric $d$ (only compatibility with $\tau$ is required). This is the reason that, while talking on a submetric space $(\cX, \tau)$, we do not relate any explicit metric with it.     
\end{remark}

\begin{remark}
	The metrizability of compact subsets is crucial for the submetric space version of the almost sure Skorokhod  representation theorem for weakly convergent sequences \cite{Jak97a}. 
	Due to its ease of applicability, this version has become a commonly used tool in studying existence problems for certain classes of stochastic partial differential equations.
\end{remark}

In what follows, we will  need a simple lemma, confirming our metric intuition.
\begin{lemma}\label{Lem:Simp}
	Let $F \subset \cX$ be $\tau$-closed. If $\{x_n\} \subset F$ and $x_n \intau x_0$, then $x_0 \in F$.
\end{lemma}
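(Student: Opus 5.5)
The plan is a direct argument by contradiction, using only the definitions of $\tau$-convergence and of closed sets. Nothing specific to submetric spaces is needed: the statement holds in any topological space. Here $x_n \intau x_0$ means that every $\tau$-open set containing $x_0$ contains $x_n$ for all sufficiently large $n$.

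Suppose $x_0 \notin F$. Since $F$ is $\tau$-closed, its complement $U = \cX \setminus F$ is $\tau$-open, and $x_0 \in U$. So $U$ is a $\tau$-neighbourhood of $x_0$. By the definition of convergence there is an $n_0$ with $x_n \in U$ for all $n \ge n_0$. In particular $x_{n_0} \in U = \cX\setminus F$. This contradicts $\{x_n\}\subset F$, so $x_0 \in F$.

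There is no real obstacle here. The only point worth stressing in the write-up is that the lemma says $\tau$-closed sets are \emph{sequentially} closed. The converse may fail in non-sequential spaces such as $(\GH,\tau_w)$, so one should not appeal to any "closure equals set of sequential limits" characterization. The argument above avoids this by using only the open complement.
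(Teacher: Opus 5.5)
Your proposal is correct and is essentially the paper's proof: both suppose $x_0\notin F$, note that $F^c$ is a $\tau$-open set containing $x_0$, and conclude that $x_n\in F^c$ for all large $n$, contradicting $\{x_n\}\subset F$. Your side remark is also accurate: this gives only sequential closedness, and the paper itself later exploits the gap between $\tau$ and $\tau_s$ (e.g.\ for $(\GH,\tau_w)$).
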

\bproof
	Suppose that $x_0 \not\in F$. As $F^c$ is $\tau$-open, it follows from the convergence $x_n \intau x_0$ that for sufficiently large $n$ we have $x_n \in F^c$. A contradiction.
\eproof   

\begin{theorem} \label{thm_seqcomp}
	A subset of a submetric space is compact if, and only if, it is {\em sequentially compact}.
\end{theorem}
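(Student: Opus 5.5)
The forward direction (compact $\Rightarrow$ sequentially compact) will rest on Theorem \ref{thm_compmetr}. Let $K$ be $\tau$-compact and fix a metric $d$ compatible with $\tau$. By Theorem \ref{thm_compmetr}, $\tau$ and $\tau_d$ coincide on $K$, so $(K,\tau|_K)$ is a compact metric space and hence sequentially compact in the metric sense. Given $\{x_n\}\subset K$, I will extract a subsequence with $x_{n_k}\to x_0\in K$ in $\tau|_K$. This is convergence in $\tau$ on $\cX$: for any $\tau$-open $U\ni x_0$, the set $U\cap K$ is relatively open in $K$ and contains $x_0$, so $x_{n_k}\in U$ eventually.

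For the converse (sequentially compact $\Rightarrow$ compact), let $K$ be sequentially compact and take an arbitrary $\tau$-open cover $\{U_i\}_{i\in\GI}$ of $K$. I will again use $d$. Since $\tau_d\subset\tau$, every $\tau$-convergent sequence is $\tau_d$-convergent, so $K$ is $\tau_d$-sequentially compact and hence $\tau_d$-compact. The cover, however, consists of $\tau$-open sets, so I will argue directly and adapt the Lebesgue-number argument, which I expect to be the main obstacle. The trouble is that $\tau$-open sets need not contain $d$-balls.

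My first attempt is to show that $\tau$ and $\tau_d$ agree on $K$, which reduces the problem to the metric case. Suppose $F$ is relatively $\tau$-closed in $K$. I claim $F$ is $\tau$-sequentially compact. Indeed, if $\{y_n\}\subset F$, sequential compactness of $K$ gives a subsequence converging in $\tau$ to some $y_0\in K$. Writing $F=G\cap K$ with $G$ $\tau$-closed, Lemma \ref{Lem:Simp} gives $y_0\in G$, so $y_0\in F$. Since $\tau$-convergence implies $d$-convergence, $F$ is $\tau_d$-sequentially compact, hence $\tau_d$-compact, hence $\tau_d$-closed.

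It follows that every relatively $\tau$-closed subset of $K$ is relatively $\tau_d$-closed. Together with $\tau_d\subset\tau$, this shows that $\tau|_K=\tau_d|_K$. The sets $U_i\cap K$ are then relatively $\tau_d$-open and cover the $\tau_d$-compact set $K$, so finitely many of them suffice. This yields $\tau$-compactness of $K$.

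The step needing the most care is the use of Lemma \ref{Lem:Simp} on relatively closed sets. It is only valid for sets of the form $G\cap K$ with $G$ closed in $\cX$, which is exactly how relatively closed sets are represented.
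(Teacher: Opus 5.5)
Your proof is correct and follows essentially the same route as the paper's. The forward direction uses the metrizability of compact sets (Theorem \ref{thm_compmetr}). For the converse, each trace $G\cap K$ of a $\tau$-closed $G$ is $\tau$-sequentially compact by Lemma \ref{Lem:Simp}, hence $\tau_d$-compact and $\tau_d$-closed, so $\tau$ and $\tau_d$ agree on $K$ and the $\tau_d$-compactness of $K$ yields the finite subcover; the Lebesgue-number detour you mention is never needed.
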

\bproof
	We know from Theorem \ref{thm_compmetr} that compact sets in a submetric space are metrizable, hence sequentially compact. So assume that $K \subset \cX$ is sequentially $\tau$-compact. Let $d$ be any metric on $\cX$, compatible with $\tau$.
	It follows that $K$ is sequentially $\tau_{d}$-compact, hence $\tau_{d}$-compact. 
	
	We shall show first that for any $\tau$-closed $F$ the intersection $F\cap K$  is sequentially $\tau$-compact, hence  $\tau_{d}$-compact. Let $x_n\in F\cap K, n\in\GN$. There exists a subsequence $(x_{n_k})$ $\tau$-convergent to $x_0\in K$. Since 
	$F$ is $\tau$-closed, $x_0 \in F$ (Lemma \ref{Lem:Simp}!) as well and the claim follows. 
	
	Thus $F\cap K$ is $\tau_{d}$-compact, hence 
	$(F\cap K)^c$ is $\tau_{d}$-open. Let $G$ be $\tau$-open. Then $G\cap K = \big(G^c \cap K\big)^c \cap K$ and so for each $G\in \tau$ there exists  $\widetilde{G}\in \tau_{d}$ such that 
	\[ G \cap K = \widetilde{G}\cap K.\]
	Finally,  let $K \subset \cup_{i\in\GI} U_i$, where $U_i,\ i\in\GI$, are $\tau$-open. Let $\widetilde{U}_i,\ i\in\GI$, be $\tau_{d}$-open and such that
	$U_i \cap K = \widetilde{U}_i\cap K$. Then also $K \subset \cup_{i\in\GI} \widetilde{U}_i$. Since $K$ is $\tau_{d}$-compact, there exists a finite set $\GI_0 \subset \GI$ such that $K \subset \cup_{i\in\GI_0} \widetilde{U}_i$. Then
	\[ K = \cup_{i\in\GI_0} \widetilde{U}_i \cap K = 
	\cup_{i\in\GI_0} U_i \cap K \subset \cup_{i\in\GI_0} U_i.\]
\eproof

\begin{remark}\label{Rem:compare}
	Suppose we have two comparable topologies $\tau_1$ and $\tau_2$ on $\cX$: $\tau_1 \subset \tau_2$. If $(\cX,\tau_1)$ is submetric, then so is $(\cX,\tau_2)$. 
	
	Further, suppose that any $\tau_1$-compact subset is also $\tau_2$-compact. This implies that the families $\cK_{\tau_i} = \{ K\,|\, \text{ $K$ is  $\tau_i$-compact}\}$, $i=1,2$, coincide. 
	
	These simple facts can have important consequences in limit theorems for stochastic processes. In order  to check \emph{the uniform tightness condition} of some family $\{X_{i}\}_{i\in\GI}$ of random elements with values in the submetric space $(\cX,\tau_1)$, one verifies whether for every $\varepsilon > 0$ there exists a $\tau_1$-compact set  $K_{\varepsilon}$ such that
	\begin{equation}\label{eq:ut}
		\bP\big( X_i \in K_{\varepsilon} \big) > 1 - \varepsilon,\qquad i\in\GI.
	\end{equation}
	When $\cK_{\tau_1} = \cK_{\tau_2}$, equation (\ref{eq:ut}) yields the uniform tightness of $\{X_{i}\}_{i\in\GI}$ in $(\cX,\tau_2)$! Moreover, since $\tau_2 \supset \tau_1$, the larger topology $\tau_2$ provides a wider class of continuous functions, which can make the resulting limit theorem substantially richer.
	
	It is interesting that one can identify the maximal topology $\tau_2$ that satisfies the property $\cK_{\tau_1} = \cK_{\tau_2}$.
\end{remark}

\begin{definition} \label{tauesclosed}
	Let $(\cX,\tau)$ be a Hausdorff topological space. Say that $F\subset \cX$ is $\tau_s$-closed if limits of $\tau$-convergent sequences of elements of $F$ remain in $F$, i.e. 
	\begin{equation}\label{eq:closed}
		\text{if $x_n \in F, \ n=1,2,\ldots$, and $x_n\intau x_0$, then $x_0\in F$}. 
	\end{equation}
	The topology given by $\tau_s$-closed sets is called the sequential topology generated by $\tau$ and will be denoted by $\tau_s$. 
\end{definition}

\begin{theorem} \label{thm_seqtopol}
	Let  $(\cX,\tau)$ be a Hausdorff topological space. Then
	\begin{enumerate}
		\item $\tau \subset \tau_s$ (i.e. $\tau_s$ is finer than $\tau$).
		\item $x_n \intaues x_0$ if, and only if, $x_n \intau x_0$.
	\end{enumerate}
	In particular, $\tau_s$ is Hausdorff. 
\end{theorem}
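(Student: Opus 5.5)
First I will check that the $\tau_s$-closed sets really form the closed sets of a topology. The empty set and $\cX$ clearly satisfy (\ref{eq:closed}). Arbitrary intersections of $\tau_s$-closed sets are again $\tau_s$-closed, directly from the definition.

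The only point needing care is finite unions. Suppose $F_1,F_2$ are $\tau_s$-closed, $x_n\in F_1\cup F_2$ and $x_n\intau x_0$. One of the sets, say $F_1$, contains infinitely many $x_n$. The corresponding subsequence still $\tau$-converges to $x_0$, since subsequences of $\tau$-convergent sequences converge to the same limit. Hence $x_0\in F_1$.

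For item 1, I will use Lemma \ref{Lem:Simp}: every $\tau$-closed set satisfies (\ref{eq:closed}), so it is $\tau_s$-closed. Taking complements gives $\tau\subset\tau_s$.

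For item 2, one direction is immediate. If $x_n\intaues x_0$, then since $\tau\subset\tau_s$, every $\tau$-neighbourhood of $x_0$ is a $\tau_s$-neighbourhood, so $x_n\intau x_0$.

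The converse is the main step, and I will argue by contradiction. Suppose $x_n\intau x_0$ but not $x_n\intaues x_0$. Then there is a $\tau_s$-open $G\ni x_0$ and a subsequence $\{x_{n_k}\}$ lying entirely in $G^c$. The subsequence still satisfies $x_{n_k}\intau x_0$. Since $G^c$ is $\tau_s$-closed, (\ref{eq:closed}) forces $x_0\in G^c$, which is a contradiction. The only subtlety is remembering to pass to a subsequence, and this is harmless because $\tau$-convergence is inherited by subsequences.

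Finally, $\tau_s$ is Hausdorff because it is finer than the Hausdorff topology $\tau$. Disjoint $\tau$-open sets separating two points are also $\tau_s$-open.
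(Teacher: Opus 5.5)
Your proof is correct and follows the paper's argument essentially step for step. You use Lemma \ref{Lem:Simp} for $\tau\subset\tau_s$, the inclusion of topologies for one direction of item 2, and the subsequence lying in the $\tau_s$-closed complement for the converse; your only addition, the check that $\tau_s$-closed sets are stable under finite unions via a subsequence, is a detail the paper takes for granted when it calls $\tau_s$ a topology.
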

\bproof If $F$ is $\tau$-closed, then it is $\tau_s$-closed by Lemma \ref{Lem:Simp}. 
	
	It follows that comparing to $\tau$, there are ``more'' $\tau_s$-open neighborhoods and therefore convergence of sequences in $\tau_s$ is ``more demanding''. Hence $x_n \intaues x_0$ implies $x_n \intau x_0$.
	
	To prove the converse implication, assume that
	$x_n \notintaues x_0$. This means that there exists a $\tau_s$-open set $U\ni x_0$ and a subsequence $\{x_{n_k}\}$ such that $x_{n_k} \in U^c$, for all $k\in\GN$. Since $U^c$ is $\tau_s$-closed, any $\tau$-limit point of $\{x_{n_k}\}$ must belong to $U^c$ and therefore cannot be $x_0$. Consequently,  $x_n \notintau x_0$.
	
	The last statement stems from (i).
\eproof

\begin{corollary} \label{compequiv} Let $(\cX,\tau)$ be a Hausdorff topological space. A subset $K \subset \cX$ is $\tau$-sequentially compact if, and only if, it is $\tau_s$-sequentially compact. 
\end{corollary}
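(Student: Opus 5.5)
The plan is to read the corollary directly off part (2) of Theorem \ref{thm_seqtopol}. That result says $\tau$ and $\tau_s$ have exactly the same convergent sequences with the same limits. Sequential compactness of $K$ is defined purely through convergent sequences: every sequence in $K$ has a subsequence converging to a point of $K$. So the notion should be identical for $\tau$ and $\tau_s$.

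For the direction from $\tau$ to $\tau_s$, assume $K$ is $\tau$-sequentially compact and take any $\{x_n\}\subset K$. Sequential compactness gives a subsequence with $x_{n_k}\intau x_0\in K$. By the ``if'' part of Theorem \ref{thm_seqtopol}(2) this yields $x_{n_k}\intaues x_0$, with the same limit $x_0\in K$. Hence $K$ is $\tau_s$-sequentially compact.

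For the converse, start from $x_{n_k}\intaues x_0\in K$. The ``only if'' part of Theorem \ref{thm_seqtopol}(2) gives $x_{n_k}\intau x_0$. This implication also follows directly from $\tau\subset\tau_s$, since every $\tau$-neighbourhood of $x_0$ is then a $\tau_s$-neighbourhood.

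The only point to watch is that ``the limit'' is unambiguous, so that the limit point really lies in $K$ in both topologies. This holds because both spaces are Hausdorff: $(\cX,\tau)$ by assumption and $(\cX,\tau_s)$ by the last assertion of Theorem \ref{thm_seqtopol}. Limits of sequences are therefore unique, and the same $x_0$ serves in both topologies. Apart from this, there is no real obstacle; the argument is a direct transcription of Theorem \ref{thm_seqtopol}(2).
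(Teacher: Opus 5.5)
Your proof is correct and matches the paper, which states the corollary without proof as an immediate consequence of part 2 of Theorem \ref{thm_seqtopol}: $\tau$ and $\tau_s$ have the same convergent sequences with the same limits. Your closing remark about uniqueness of limits (Hausdorffness) is harmless but unnecessary, since part 2 carries the specific limit $x_0\in K$ directly from one topology to the other.
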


\begin{theorem}\label{thm_finest}
	Let $(\cX,\tau)$ be a submetric space. The topology $\tau_s$ is the finest topology among topologies which are finer than $\tau$ and have the same compact subsets as $\tau$.
\end{theorem}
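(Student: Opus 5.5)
Two things need to be shown: that $\tau_s$ itself belongs to the class in question, and that it dominates every other member. The plan treats them in that order.

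\emph{First part: $\tau_s$ is finer than $\tau$ and has the same compact sets.} Theorem \ref{thm_seqtopol} already gives $\tau\subset\tau_s$. Remark \ref{Rem:compare} tells us $(\cX,\tau_s)$ is again submetric, since any $\tau$-compatible metric $d$ satisfies $\tau_d\subset\tau\subset\tau_s$. So Theorem \ref{thm_seqcomp} applies to both $\tau$ and $\tau_s$: in each of these spaces, compactness coincides with sequential compactness. Corollary \ref{compequiv} says $\tau$-sequential compactness and $\tau_s$-sequential compactness are the same property. Chaining these, $\cK_{\tau}=\cK_{\tau_s}$.

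\emph{Second part: maximality.} Let $\tau'$ be any topology with $\tau\subset\tau'$ and $\cK_{\tau'}=\cK_\tau$. We show $\tau'\subset\tau_s$. Equivalently, every $\tau'$-closed set $F$ is $\tau_s$-closed, i.e. closed under limits of $\tau$-convergent sequences.

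Take $x_n\in F$ with $x_n\intau x_0$. The key observation is that $K=\{x_0\}\cup\{x_n\,;\,n\in\GN\}$ is $\tau$-compact, which is elementary. Given an open cover, one member contains $x_0$, hence all but finitely many $x_n$, and finitely many further members cover the rest. By hypothesis $K$ is therefore $\tau'$-compact.

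Next we claim $\tau'$ and $\tau$ induce the same topology on $K$. The space $(\cX,\tau')$ is submetric by Remark \ref{Rem:compare}, since $\tau_d\subset\tau\subset\tau'$. By Theorem \ref{thm_compmetr}, both $\tau$ and $\tau'$ coincide on the compact set $K$ with the topology induced by $d$. Hence $x_n\to x_0$ in $\tau'|_K$ as well, so $x_n\to x_0$ in $\tau'$, because $\tau'$-neighbourhoods of $x_0$ in $\cX$ restrict to $\tau'|_K$-neighbourhoods.

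Since $F$ is $\tau'$-closed, Lemma \ref{Lem:Simp} applied to $\tau'$ gives $x_0\in F$. Thus $F$ is $\tau_s$-closed, and $\tau'\subset\tau_s$.

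\emph{Main obstacle.} The delicate step is transferring the convergence $x_n\to x_0$ from $\tau$ to $\tau'$. A priori a finer topology has fewer convergent sequences. The way around this is to encode the convergent sequence together with its limit as a compact set, and then use the metrizability from Theorem \ref{thm_compmetr}: on a compact set the topology is forced to agree with $\tau_d$, independently of which of the comparable topologies we started from. One must also check that the hypotheses of that theorem hold for $\tau'$, which is where the submetric property of finer topologies is used.
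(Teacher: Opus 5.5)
Your proof is correct and is essentially the paper's argument. The first part is identical, and the maximality part uses the same compact set $K=\{x_0\}\cup\{x_n\,;\,n\in\GN\}$ and the same transfer of compactness from $\tau$ to $\tau'$. The only variation is the final step: you carry the convergence $x_n\to x_0$ over to $\tau'$ via Theorem~\ref{thm_compmetr}, while the paper notes directly that $K\cap F$ is $\tau'$-closed in the $\tau'$-compact set $K$, hence $\tau'$-compact, hence $\tau$-compact and $\tau$-closed, so it contains $x_0$. That is the same compact-versus-Hausdorff mechanism written out explicitly, and it avoids having to check that $\tau'$ is submetric.
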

\bproof
	Since $\tau \subset \tau_s$, $\tau_s$ is submetric. By Theorem \ref{thm_seqcomp} for both $\tau$ and $\tau_s$, a subset $K$ is compact if and only if it is sequentially compact. By Corollary \ref{compequiv}, the sequential compactness means the same for both $\tau$ and $\tau_s$. It follows that $\tau_s$ has the desired property. 
	
	Now suppose that $\tau \subset \tau'$ and that $\cK_{\tau} = \cK_{\tau'}$. Let $F$ be  $\tau'$-closed. We will show that it is $\tau_s$-closed and this will imply $\tau' \subset \tau_s$.
	Take $x_n\in F, n\in\GN,$ and assume that $x_n \intau x_0$. Set $K = \{x_n\,;\, n\in\GN\} \cup \{x_0\}$. It is $\tau$-compact, hence $\tau'$-compact. In particular, $K \cap F$ is also $\tau'$-compact, hence $\tau$-compact. Therefore $x_0 \in F\cap K$ and $x_0 \in F$. This means that $F\in \tau_s$.  
\eproof
\begin{remark}
	The topology $\tau_s$ can be substantially finer than the initial submetric topology $\tau$. For instance, if we start with the weak topology $\tau_w$ on a separable Hilbert space (Example \ref{Ex:Hilb}), then the sequential topology $(\tau_w)_s$ is the so-called bounded topology on $\GH$ (see, e.g., \cite[Section 5.5]{DunSch58}).
\end{remark}

\begin{remark}\label{Rem:Intuition}
	Theorem \ref{thm_seqcomp} and relation (\ref{eq:2}) provide a modeling framework, which is unavailable not only within the metric regime, but also in the broader context of completely regular spaces with metrizable compacts \cite{SmFo76}. Specifically, we refer to sequential spaces generated by $\cL$-convergences.  We will focus only on intuition and will not develop the formal theory here, referring instead to \cite[Section 6]{Jak18} for all necessary concepts and results.
	
	Imagine a space $\cX$ within which we are given a family $\cC$ of checkable conditions ``suspected" for forming criteria of compactness in some topology. Suppose we are able to find a convergence of sequences $\conver$ of elements of $\cX$ such that any set $C$ satisfying the conditions $\cC$ is relatively compact with respect to $\conver$. If there exists a countable family $\{f_i\}$ of {\em sequentially} continuous functions on $(\cX,\conver)$ which separates points in $\cX$, then (\ref{eq:2}) defines a compatible metric that converts our space $(\cX,\conver)$ into a submetric space!
	
	Although the above scenario appears technically demanding, it was successfully implemented?using conditions $\cC$ from Example~\ref{Ex:Stop}?in the construction of the so-called $S$ topology on the Skorokhod space \cite{Jak97b}. This topology, which is weaker than the more popular Skorokhod $J_{1}$ and $M_{1}$ topologies, has found significant applications, most recently in optimal martingale transport on the Skorokhod space (see, e.g., \cite{CKPS21, LiNe19}). Moreover, even the awareness of the nuances associated with the construction of submetric spaces from sequential spaces is slowly spreading (see, e.g., \cite{JAR26}). 
\end{remark}

\section{C{\`a}dl{\`a}guity and relative compactness}\label{Sec:relcomp}
Let us consider a c\`adl\`ag function $x : [0,T] \to \cX$ with values in a topological space $(\cX,\tau)$. The set of values $V_x = \{x(t)\,;\,t \in [0,T]\}$ is \emph{relatively compact}. Indeed, if we take any sequence $\{x(t_n)\} \subset V_x$, then we may find a monotone subsequence $\{t_{n_k}\}$ converging to some $t_0 \in [0,T]$, and along this subsequence either $x(t_{n_k}) \to x(t_0)$ or $x(t_{n_k}) \to x(t_0-)$, depending on whether $\{t_{n_k}\}$ is non-decreasing or non-increasing.

Returning to the question from the Introduction regarding the compactness of the extended set of values $ K_x = \{ x(t)\, ; \, t\in [0,T]\} \cup \{ x(t-)\, ; \, t\in (0,T]\}$ when $(\cX,\tau)$ is submetric, we see that it can be reduced to two intermediate questions about submetric spaces:
\begin{enumerate}
	\item What is the closure of a relatively compact set?
	\item Is the closure of a relatively compact set compact?
\end{enumerate}

The answer for the first question is quite natural.

\begin{theorem} \label{Th:closure}
	In submetric spaces, the closure of a relatively compact subset $J$ coincides with the metric closure and consists of the limits of convergent sequences of elements of $J$. 
\end{theorem}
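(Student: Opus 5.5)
We have to show three sets coincide for a relatively compact $J$ (in the paper's sense: every sequence in $J$ has a $\tau$-convergent subsequence with limit in $\cX$). Write $\overline{J}^{\tau}$ for the $\tau$-closure, $\overline{J}^{d}$ for the $\tau_d$-closure (with $d$ any compatible metric), and $L(J)$ for the set of $\tau$-limits of $\tau$-convergent sequences from $J$. The plan is to prove the cycle of inclusions
\[ L(J) \subset \overline{J}^{\tau} \subset \overline{J}^{d} \subset L(J). \]

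The first inclusion follows from Lemma \ref{Lem:Simp}: $\overline{J}^{\tau}$ is $\tau$-closed and contains $J$, so it contains the $\tau$-limit of every $\tau$-convergent sequence from $J$. The second inclusion holds because $\tau_d \subset \tau$. Indeed, $\overline{J}^{d}$ is $\tau_d$-closed, hence $\tau$-closed, and it contains $J$; therefore it contains the smallest $\tau$-closed set containing $J$.

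The substantive step is $\overline{J}^{d} \subset L(J)$, and this is where relative compactness is used. Take $x_0 \in \overline{J}^{d}$. Since $\tau_d$ is metric, there is a sequence $\{x_n\} \subset J$ with $d(x_n,x_0) \to 0$. By relative compactness, some subsequence satisfies $x_{n_k} \intau y$ for some $y \in \cX$. Because $\tau_d \subset \tau$, $\tau$-convergence implies $\tau_d$-convergence, so $x_{n_k} \to y$ in $d$ as well. Uniqueness of limits in the metric space $(\cX,\tau_d)$ then gives $y = x_0$. Hence $x_0$ is the $\tau$-limit of a sequence from $J$, so $x_0 \in L(J)$.

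The only delicate point is that $\tau$ itself need not be sequential, so one cannot extract a $\tau$-convergent sequence directly from a point of the $\tau$-closure. The argument above sidesteps this by passing through the metric closure, where sequences suffice, and then lifting back to $\tau$. This lifting combines relative compactness (to obtain some $\tau$-limit) with the Hausdorff property of $\tau_d$ (to identify that limit as $x_0$). Closing the cycle shows that all three sets coincide, which is exactly the statement of Theorem \ref{Th:closure}.
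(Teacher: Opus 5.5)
Your proof is correct and uses the same ingredients as the paper. These are Lemma \ref{Lem:Simp} for $L(J)\subset\overline{J}^{\tau}$, the inclusion $\tau_d\subset\tau$ for $\overline{J}^{\tau}\subset\overline{J}^{d}$, and relative compactness plus uniqueness of $d$-limits for $\overline{J}^{d}\subset L(J)$. Arranging these as a cycle is slightly more economical than the paper, which also checks $L(J)\subset\overline{J}^{d}$ directly via continuity of $d$; your cycle makes that step redundant.
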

\bproof
	Let $J$ be a relatively compact subset of $(\cX,\tau)$ and let $\overline{J}$ be the set described in the theorem: the set of $\tau$-limits of all convergent sequences of elements of $J$. Let ${\overline{J}}^{\tau}$ be the $\tau$-closure of $J$. 
	
	Let $d$ be a metric compatible with $\tau$ and let ${\overline{J}}^{\tau_d}$ be the closure of $J$ with respect to $d$. It consists of $d$-limits of sequences of elements in $J$. We claim that ${\overline{J}}^{\tau_d} = \overline{J}$. 
	Indeed, if $\{x_n\} \subset J$ and $x_n\intau x_0$, then $d(x_n,x_0) \to 0$ due to the $\tau$-continuity of $d$. Hence $\overline{J} \subset {\overline{J}}^{\tau_d}$. 
	Conversely, take  $x_0 \in {\overline{J}}^{\tau_d}$. One can find  a sequence $\{x_n\} \subset J$ such that $d(x_n,x_0) \to 0$. By the relative compactness of $J$, there exists a $\tau$-convergent subsequence $x_{n_k}\intau y_0 \in \overline{J}$. Again, by the continuity of $d$, we have $d(x_{n_k},y_0) \to 0$. It follows that $y_0 = x_0$ and so $\overline{J} = {\overline{J}}^{\tau_d}$.
	
	On the other hand, as $\tau_d \subset \tau$, we have $\overline{J}^{\tau} \subset \overline{J}^{\tau_d}$. Since $\overline{J}^{\tau_d} = \overline{J}$, it is enough to show that $\overline{J} \subset \overline{J}^{\tau}$. Let $F$ be any $\tau$-closed set containing $J$. For each $x_0 \in \overline{J}$, one can find a sequence $\{x_n\} \subset J \subset F$ such that $x_n \xrightarrow{\tau} x_0$. By Lemma \ref{Lem:Simp}, we have $x_0 \in F$, which implies that $\overline{J} \subset F$. Consequently, $\overline{J} \subset \overline{J}^{\tau}$.
	
\eproof 
\begin{corollary}\label{Cor:relcomp}
	In a submetric space $(\cX,\tau)$, if both $J$ and $\overline{J}$ are relatively $\tau$-compact, then $\overline{J}$ is $\tau$-compact.
\end{corollary}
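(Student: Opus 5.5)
By Theorem \ref{thm_seqcomp}, it suffices to show that $\overline{J}$ is sequentially $\tau$-compact. Theorem \ref{Th:closure} tells us that $\overline{J}$ is exactly the $\tau$-closure of $J$, i.e.\ the set of $\tau$-limits of sequences from $J$. In particular $\overline{J}$ is $\tau$-closed. So the plan is to take an arbitrary sequence in $\overline{J}$ and produce a subsequence converging to a point that lies again in $\overline{J}$.

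Let $\{y_n\}\subset \overline{J}$. By hypothesis $\overline{J}$ is relatively $\tau$-compact, so there is a subsequence $y_{n_k}\intau y_0$ for some $y_0\in\cX$. It remains to show $y_0\in\overline{J}$. Since $\overline{J}$ is $\tau$-closed (it equals $\overline{J}^{\tau}$ by Theorem \ref{Th:closure}), Lemma \ref{Lem:Simp} applied with $F=\overline{J}$ gives $y_0\in\overline{J}$ directly. Thus every sequence in $\overline{J}$ has a subsequence $\tau$-converging inside $\overline{J}$, so $\overline{J}$ is sequentially compact and, by Theorem \ref{thm_seqcomp}, compact.

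The only delicate point is the identification of $\overline{J}$, defined as the set of sequential limits, with a genuinely $\tau$-closed set. A priori, sequential limits of sequential limits need not be sequential limits of the original set. This is exactly where the relative compactness of $J$ enters, through Theorem \ref{Th:closure}, which identifies the sequential closure with both the $\tau$-closure and the $d$-closure.

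If one prefers not to rely on that identification, an alternative route works through the metric $d$. Each $y_{n_k}$ is a limit of points of $J$, so we can choose $x_k\in J$ with $d(x_k,y_{n_k})<1/k$. Continuity of $d$ gives $d(y_{n_k},y_0)\to 0$, hence $d(x_k,y_0)\to0$. Relative compactness of $J$ then yields a further subsequence of $\{x_k\}$ that $\tau$-converges to some $z\in\overline{J}$. Since $\tau$-convergence implies $d$-convergence, $d(x_k,z)\to0$ along that subsequence, and uniqueness of $d$-limits forces $z=y_0$. So $y_0\in\overline{J}$ again.
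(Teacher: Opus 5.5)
Your main argument is correct and is exactly the paper's proof: Theorem~\ref{Th:closure} makes $\overline{J}$ $\tau$-closed, so Lemma~\ref{Lem:Simp} upgrades the relative compactness of $\overline{J}$ to sequential compactness, and Theorem~\ref{thm_seqcomp} then gives compactness. Your alternative metric route is also valid; it simply repeats inline the part of the proof of Theorem~\ref{Th:closure} showing that the sequential closure coincides with the $d$-closure.
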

\bproof
	By Lemma \ref{Lem:Simp} $\overline{J}$ is sequentially $\tau$-compact. By Theorem \ref{thm_seqcomp} it is $\tau$-compact.
\eproof

\begin{remark}
	It follows from the above theorem that in a submetric space, for a c\`adl\`ag function  $x : [0,T] \to \cX$, the closure of $V_x = \{x(t)\,|\,t \in [0,T]\}$ is $ K_x = \{ x(t)\, ; \, t\in [0,T]\} \cup \{ x(t-)\, ; \, t\in (0,T]\}$. Is it compact?
\end{remark}
\begin{example}\label{Ex:Jans}
	This is the example from \cite[Example 2.6]{Jan26}. Let $(\cX,\tau)$ be the Smirnov space described in Example \ref{Ex:Smir}. Recall that $Z = \{ 1/n\,|\,n\in\GN\}$ is the non-typical closed set in $\GR^1$. Let $x : [0,1]\to \cX$ be defined by
	\begin{equation}
		x(t) = \begin{cases} 
			0, &\text{if $x=0$};\\
			\frac{1}{2}(t+\frac{1}{n}), &\text{if $t \in [\frac{1}{n+1},\frac{1}{n})$}, n\in \GN;\\
			1, &\text{if $x=1$}.
		\end{cases}
	\end{equation}
	Note that $V_x \cap Z = \emptyset $, while $K_x = [0,1]\supset Z$.  Since no subsequence of distinct elements of $Z$ is convergent, and $(\cX, \tau)$ is a submetric space, the set $K_x$ \emph{is not} $\tau$-compact  by Theorem \ref{thm_seqcomp}.
\end{example}
\begin{remark}
	The Smirnov space is, in several aspects, pathological among submetric spaces.  In what follows we shall restrict our attention to \emph{regular submetric spaces}. 	
\end{remark}
\begin{definition} \label{Def:regsubmspace}
	We shall say that a topological space $(\cX,\tau)$ is \emph{a regular submetric space} if it is a submetric space and the closure $\overline{J}$ of each relatively $\tau$-compact space $J$ is $\tau$-compact.
\end{definition}
\begin{corollary}
	If $(\cX,\tau)$ is a regular submetric space, then for every c\'adl\`ag function $x :[0,T] \to \cX$ the extended set of values $K_x = \{ x(t)\, ; \, t\in [0,T]\} \cup \{ x(t-)\, ; \, t\in (0,T]\}$ is $\tau$-compact.
\end{corollary}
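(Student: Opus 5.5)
The plan is to combine three facts established above: the relative compactness of $V_x$, the identification of its closure given by Theorem \ref{Th:closure}, and Definition \ref{Def:regsubmspace}. First we check that $V_x = \{x(t)\,;\,t\in[0,T]\}$ is relatively $\tau$-compact. This is the argument at the start of Section \ref{Sec:relcomp}. Given a sequence $\{x(t_n)\}$, choose a monotone subsequence $t_{n_k}\to t_0$. If it is eventually constant, the limit is $x(t_0)$ trivially. If it is strictly decreasing to $t_0$, right-continuity gives $x(t_{n_k})\intau x(t_0)$. If it is strictly increasing to $t_0>0$, the existence of left limits gives $x(t_{n_k})\intau x(t_0-)$.

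Next we identify the closure of $V_x$ with $K_x$. By Theorem \ref{Th:closure}, the $\tau$-closure $\overline{V_x}$ is the set of $\tau$-limits of convergent sequences from $V_x$. The inclusion $K_x\subset\overline{V_x}$ holds because $x(t)$ is the limit of a constant sequence and $x(t-)$ is the limit of $x(t-1/n)$. For the reverse inclusion, take $x(t_n)\intau y$. Passing to a monotone subsequence as above, that subsequence converges to $x(t_0)$ or to $x(t_0-)$. Since $\tau$ is Hausdorff (the metric $d$ is continuous, so $\tau_d\subset\tau$ and $\tau$ separates points), limits are unique, and hence $y\in K_x$. This is the content of the Remark following Corollary \ref{Cor:relcomp}.

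Finally, since $(\cX,\tau)$ is a regular submetric space and $V_x$ is relatively $\tau$-compact, Definition \ref{Def:regsubmspace} gives that $\overline{V_x}=K_x$ is $\tau$-compact. The only points needing care are the uniqueness of limits and the monotone-subsequence case analysis, including the eventually constant case. There is no genuine obstacle: the corollary is essentially immediate from the definition once the closure has been identified as $K_x$.
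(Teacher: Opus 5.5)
Your proposal is correct and follows the paper's own reasoning. The paper states this corollary without proof, as an immediate consequence of three facts: the relative compactness of $V_x$ (start of Section~\ref{Sec:relcomp}), the identification $\overline{V_x}=K_x$ from Theorem~\ref{Th:closure} and the Remark that follows it, and Definition~\ref{Def:regsubmspace}. You combine the same three facts and add the details the paper leaves implicit: uniqueness of limits in the Hausdorff space $\tau$, and the correct pairing of strictly decreasing times with $x(t_0)$ and strictly increasing ones with $x(t_0-)$ (the paper's opening sentence of Section~\ref{Sec:relcomp} swaps these two cases).
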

\begin{remark} The existence of non-regular submetric spaces has a significant impact on the theory of limit theorems for stochastic processes. Let us return to the notion of the uniform tightness, discussed in Remark \ref{Rem:compare}. Recall that a family $\{X_{i}\}_{i\in\GI}$ of random elements with values in a submetric space $(\cX,\tau)$ is uniformly $\tau$-tight if  for every $\varepsilon > 0$ there exists a $\tau$-compact set  $K_{\varepsilon}$ such that
	\[
	\bP\big( X_i \in K_{\varepsilon} \big) > 1 - \varepsilon,\qquad i\in\GI.
	\]
	If $(\cX,\tau)$ is a regular submetric space, the above condition can be replaced with 
	\begin{equation}\label{eq:nonreg}
		\bP\big( X_i \in J_{\varepsilon} \big) > 1 - \varepsilon, \qquad i\in\GI,
	\end{equation}
	where $J_{\varepsilon}$ is merely relatively $\tau$-compact. However, if $(\cX,\tau)$ \emph{is not} a regular submetric space,  condition (\ref{eq:nonreg}) is strictly weaker than the uniform $\tau$-tightness.
\end{remark}  

What are the sufficient conditions  for $(\cX,\tau)$ to be a regular submetric space? We shall provide one verifiable  and two theoretical ones.

Recall that a functional $h :\cX \to \GR^1$ is {\em sequentially lower semicontinuous} if
\[\liminf_{n\to\infty} h(x_n) \geq f(x_0),\]
whenever $x_n \intau x_0$. 

\begin{theorem}\label{Th:lowersemi}
	Let $(\cX,\tau)$ be a submetric space. Suppose we are given a family $\{h_i : \cX \to [0,+\infty)\}_{i\in\GI}$ of {\em sequentially lower semicontinuous functionals} on $(\cX,\tau)$ such that
	$J \subset \cX$ is relatively $\tau$-compact if, and only if, 
	\[ \sup_{x\in J} h_i(x) < +\infty,\qquad i\in\GI.\]
	
	Then $(\cX,\tau)$ is a regular submetric space. 
\end{theorem}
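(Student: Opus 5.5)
To verify Definition \ref{Def:regsubmspace}, we fix a relatively $\tau$-compact set $J \subset \cX$ and show that its closure is $\tau$-compact. By Theorem \ref{Th:closure}, this closure is the set $\overline{J}$ of all $\tau$-limits of convergent sequences of elements of $J$. By Corollary \ref{Cor:relcomp}, since $J$ is relatively compact, it is enough to prove that $\overline{J}$ is itself relatively $\tau$-compact. The hypothesis gives a way to test this: we only need to check that $\sup_{x\in\overline{J}} h_i(x) < +\infty$ for each $i\in\GI$.

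The key step is a one-line semicontinuity estimate. Fix $i\in\GI$ and set $M_i = \sup_{x\in J} h_i(x)$. This is finite by the ``only if'' direction of the hypothesis, applied to the relatively compact set $J$. Now take $x_0 \in \overline{J}$ and choose $\{x_n\}\subset J$ with $x_n \intau x_0$. By sequential lower semicontinuity of $h_i$,
\[ h_i(x_0) \le \liminf_{n\to\infty} h_i(x_n) \le M_i. \]
Hence $\sup_{x\in\overline{J}} h_i(x) \le M_i < +\infty$ for every $i$. By the ``if'' direction of the hypothesis, $\overline{J}$ is relatively $\tau$-compact.

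To conclude, we apply Corollary \ref{Cor:relcomp}. Alternatively, we can argue directly: any sequence in $\overline{J}$ has a convergent subsequence, and its limit lies in $\overline{J}$ because $\overline{J}$ is $\tau$-closed (Lemma \ref{Lem:Simp}). So $\overline{J}$ is sequentially compact, hence $\tau$-compact by Theorem \ref{thm_seqcomp}.

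No step is a real obstacle. The only point needing care is that semicontinuity is merely sequential. This is exactly why Theorem \ref{Th:closure} matters: it guarantees that every point of the closure is the limit of a \emph{sequence} from $J$, and not only of a net, so sequential lower semicontinuity is enough. We also read the displayed definition of lower semicontinuity with $h(x_0)$ on the right-hand side, since the $f$ there is a typo.
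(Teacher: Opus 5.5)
Your proof is correct and follows the paper's argument essentially verbatim. Theorem~\ref{Th:closure} provides a sequence from $J$ converging to each point of the closure, sequential lower semicontinuity bounds $h_i$ on $\overline{J}$ by $\sup_{x\in J} h_i(x)$, and Corollary~\ref{Cor:relcomp} upgrades relative compactness of $\overline{J}$ to compactness (your reading of $f(x_0)$ as $h(x_0)$ is also right).
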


\bproof
	Let $J$ be relatively $\tau$-compact and let $x_0 \in {\overline{J}}^{\tau}$. By Theorem \ref{Th:closure}, there exists a sequence $\{x_n\} \subset J$ such that
	$x_n \intau x_0$, hence 
	\[ h_i(x_0) \leq \liminf_{n\to\infty} h_i(x_n) \leq \sup_{x\in J} h_i(x),\qquad i\in\GI.\]  
	It follows that ${\overline{J}}^{\tau}$ is relatively $\tau$-compact, hence $\tau$-compact (Corollary \ref{Cor:relcomp})
\eproof

\begin{example}\label{Ex:Hilb2}
	Let us return to Example \ref{Ex:Hilb}. It is well-known  that a subset $J \subset \GH$ is relatively $\tau_w$-compact if, and only if, it is norm-bounded, i.e., 
	\[ \sup_{x\in J} \|x\| < +\infty.\]
	(See, e.g., the Eberlein-\v{S}mulyan theorem, \cite[Section V.6]{DunSch58}).
	Furthermore, the norm is sequentially lower semicontinuous.
	Indeed, if $x_n \intauw x_0$, then for every $i\in \GN$ we have $\langle x_n,e_i\rangle \to \langle x_0,e_i\rangle$ and by the Fatou lemma
	\[ \liminf_{n\to\infty} \|x_n\|^2 = \liminf_{n\to\infty} \sum_{i=1}^{\infty} \langle x_n,e_i\rangle^2 \geq \sum_{i=1}^{\infty} \langle x_0,e_i\rangle^2 = \|x_0\|^2.\]
	It then follows that $(\GH,\tau_w)$ is a regular submetric space.
\end{example}
\begin{example}[$S$ topology]\label{Ex:Stop}
	The $S$ topology on the Skorokhod space $\GD = \GD([0,T])$ of c\`adl\`ag functions $x : [0,T] \to \GR^1$ was constructed in  \cite{Jak97b}. We will define the set $\cC$ of conditions (see Remark \ref{Rem:Intuition}) implying the relative compactness in the $S$ topology.
	
	Let us first introduce some relevant notation.
	\begin{enumerate}
		\item $\|x\|_{\infty}$ is the uniform norm on $\GD$, i.e., $ \|x\|_{\infty} = \sup_{t\in [0,T]} |x(t)|$.
		\item 
		For $a < b$, $N^{a,b}(x)$ is \emph{the number of up-crossings of levels $a$ and $b$ by function $x\in \GD$}. In other words, $N^{a,b}(x)$ is the largest integer $k$ such that there are numbers  $0 \leq t_1 < t_2 <t_3 < \ldots < t_{2k-1} < t_{2k} \leq T$ satisfying $x(t_{2i - 1}) < a,\ x(t_{2i}) > b$, for each $i=1,2,\ldots, k$.
		\item For $\eta > 0$, $N_{\eta}(x)$ is \emph{the number of $\eta$-oscillations} of $x\in \GD$ on $[0,T]$. This means that $N_{\eta}(x)$ is the largest ineger $k$ such that there are numbers $0 \leq t_1 < t_2 \leq t_3 <  \ldots \leq  t_{2k-1} < t_{2k} \leq T$ satisfying $ \big|x(t_{2i}) - x(t_{2i - 1})\big| > \eta$, for each $i=1,2,\ldots, k$.
		\item $\|v\|(T)$ is \emph{the total variation of $v$ on $[0,T]$}:
		\[
		\|v\|(T)
		= \sup \big\{|v(0)| + \sum_{i=1}^{m} |v(t_i) - v(t_{i-1})|\big\}, \]
		where the supremum is taken over all 
		$0=t_0 < t_1 < \ldots < t_m = T,\ m\in\GN$.
		\item $\bV = \big\{ x \in \GD\,;\, \|x\|(T) < +\infty\big\}$.
		\item If $v_{n,\varepsilon}, v_{0,\varepsilon} \in \bV$, then  $v_{n,\varepsilon} \Rightarrow v_{0,\varepsilon}$ means that 
		\begin{equation}\label{evconvclear}
			\int_{[0,T]} f(t) \,dv_{n,\varepsilon}(t) \to \int_{[0,T]} f(t)\,dv_{0,\varepsilon}(t),
		\end{equation}
		for each continuous function $f : [0,T] \to \GR^1$. In particular, setting $f(t) \equiv 1$ we get
		\begin{equation}\label{vone}
			v_{n,\varepsilon}(T) \to v_{0,\varepsilon}(T).
		\end{equation}
	\end{enumerate}

	The $S$ topology is a sequential topology defined via the $S$-convergence. 
	We shall write $x_n \ines x_0$
	if for every $\varepsilon > 0$ one can
	find elements $v_{n,\varepsilon}\in \GV$, $n=0,1,2,\ldots $ which are
	$\varepsilon$-uniformly close to $x_n$'s and weakly-$*$ convergent:
	\begin{eqnarray}
		\|x_n - v_{n,\varepsilon}\|_{\infty} \leq \varepsilon,&&\ n = 0, 1, 2,
		\ldots, \label{vclose}\\
		v_{n,\varepsilon} \Rightarrow v_{0,\varepsilon},&& \text{as $n\to\infty$}.\label{vconv}
	\end{eqnarray}
	Let us consider the following conditions describing properties of a subset $K\subset \GD$.
	\begin{align}
		\label{2e1}
		\sup_{x\in K}
		\|x\|_{\infty} &< +\infty. \\
		\label{2e2}
		\sup_{x\in K} N^{a,b}(x) &< +\infty,\quad \text{for all $a < b$}.\\
		\label{2e3}
		\sup_{x\in K} N_{\eta}(x) &< +\infty, \quad \text{for every $\eta > 0$}.
	\end{align}
	Then one can prove that either of the equivalent sets of conditions \ref{2e1} $+$ \ref{2e2} and  \ref{2e1} $+$ \ref{2e3} gives a criterion of relative $S$-compactness for $K$.
	
	Moreover, \cite[Corollary 2.10]{Jak97b} states that $\|\cdot\|_{\infty}$, $N^{a,b}(\cdot)$ for $a < b$, and $N_{\eta}(\cdot)$ for $\eta > 0$ are sequentially lower semicontinuous with respect to  $x_n \ines x_0$.
	
	We have thus found at least two candidates  
	for the family $\{h_i\}_{i\in\GI}$ in Theorem \ref{Th:lowersemi}.
\end{example}

\begin{remark}
	More general (but still applicable) sufficient conditions for $(\cX,\tau)$ to be a regular submetric space can be found in \cite[Theorem 6.3]{Jak00}.
	
	In fact, the ideal situation would be if we could prove that a ``regular topological space'' that is also a ``submetric space'' yields a ``regular submetric space''.
	While we are unable to obtain such a result, we  provide two partial results in this direction. 
\end{remark}
Recall that $(\cX,\tau)$ is a regular topological space if for each $\tau$-closed set $F$ and each $x_0 \notin F$ there are $\tau$-open sets $U$ and $V$ such that $F \subset U$, $x_0 \in V$ and $U\cap V = \emptyset$.
\begin{proposition}
	Let $(\cX,\tau)$ be a submetric space and a regular topological space. Suppose that either of the following two conditions is satisfied: 
	\begin{enumerate}
		\item $(\cX,\tau)$ is sequential.
		\item $(\cX,\tau)$ is first-countable (i.e., at each point, there is a countable local basis).
	\end{enumerate}
	Then $(\cX,\tau)$ is a regular submetric space. 
\end{proposition}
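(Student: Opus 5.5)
The plan is to reduce everything to Corollary \ref{Cor:relcomp}. Let $J$ be relatively $\tau$-compact and let $d$ be a metric compatible with $\tau$. By Theorem \ref{Th:closure}, the closure $\overline{J}$ is the set of $\tau$-limits of sequences from $J$, and it coincides with the $d$-closure. It therefore suffices to show that $\overline{J}$ is relatively $\tau$-compact: every sequence $\{y_n\}\subset\overline{J}$ must have a $\tau$-convergent subsequence.

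\emph{Step 1 (candidate limit).} For each $n$, choose $x_n\in J$ with $d(x_n,y_n)<1/n$. This is possible because $y_n$ is a $\tau$-limit, hence a $d$-limit, of elements of $J$. By relative compactness of $J$, some subsequence satisfies $x_{n_k}\intau z$. Continuity of $d$ gives $d(x_{n_k},z)\to 0$, and therefore $d(y_{n_k},z)\to0$. So $z$ is the only possible $\tau$-limit of $\{y_{n_k}\}$, and of any of its subsequences.

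\emph{Step 2 (upgrading $d$-convergence to $\tau$-convergence).} This is the main obstacle, and it is where regularity enters. Suppose $y_{n_k}\notintau z$. Then there are a $\tau$-open $U\ni z$ and infinitely many $k$ with $y_{n_k}\notin U$. By regularity, pick a $\tau$-open $V\ni z$ whose $\tau$-closure $\overline V$ lies in $U$. Along those $k$, $y_{n_k}$ lies in the $\tau$-open set $W=\cX\setminus\overline V$. Since $y_{n_k}$ is the $\tau$-limit of a sequence from $J$, that sequence eventually enters $W$. Using continuity of $d$, we can then pick $x'_k\in J\cap W$ with $d(x'_k,y_{n_k})<1/k$. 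Applying relative compactness of $J$ again, a further subsequence of $\{x'_k\}$ $\tau$-converges to some $w$. Since $d(x'_k,z)\to0$, we get $w=z$. However, every $x'_k$ lies in the $\tau$-closed set $\cX\setminus V$, so Lemma \ref{Lem:Simp} gives $z\notin V$, which is a contradiction. Hence $y_{n_k}\intau z$, so $\overline{J}$ is relatively $\tau$-compact, and Corollary \ref{Cor:relcomp} makes it $\tau$-compact.

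\emph{Role of (i) and (ii).} The argument above appears to use only regularity together with Theorem \ref{Th:closure}. I would nevertheless recheck Step 2 carefully, since the paper indicates that regularity alone was not known to suffice. If a gap appears, I would use the extra hypotheses as follows. Under (ii), a countable decreasing local base $\{V_m\}$ at $z$ with $\overline{V_{m+1}}\subset V_m$, available by regularity, allows a diagonal choice of $y_{n_k}\in V_{m_k}$. Under (i), I would argue that the set $\{y_{n_k}\}\cup\{z\}$ has the required closure properties by checking sequential closedness directly, via the same contradiction as in Step 2.
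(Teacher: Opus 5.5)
Your proof is correct, and it takes a genuinely different and more general route than the paper.

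The paper argues by contradiction. It starts from a sequence $\{x_n\}\subset\overline{J}\setminus J$ with no $\tau$-convergent subsequence and a $d$-limit point $x_0$. It then uses regularity to separate the \emph{point set} $F=\{x_n\}$ from $x_0$. This requires $F$ to be $\tau$-closed, which is exactly what hypothesis (i) supplies. Under (ii), a non-closed $F$ is handled separately: a countable base at $x_0$ directly yields a subsequence converging to $x_0$.

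You instead apply regularity \emph{at the candidate limit} $z$, separating $z$ from $\cX\setminus U$, where $U$ witnesses the failure of $y_{n_k}\intau z$. That set is closed automatically, so you need nothing about whether the range of the sequence is closed.

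The remaining steps check out:
\begin{itemize}
\item $(\cX\setminus\overline V)\cap\{x:\,d(x,y_{n_k})<1/k\}$ is a $\tau$-open neighbourhood of $y_{n_k}\in\overline J$, so it meets $J$. You do not even need the sequential description from Theorem~\ref{Th:closure} here.
\item The metric forces the $\tau$-limit of any convergent subsequence of $\{x'_k\}$ to be $z$.
\item Lemma~\ref{Lem:Simp}, applied to the closed set $\cX\setminus V$, then gives the contradiction.
\end{itemize}

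Consequently hypotheses (i) and (ii) are superfluous. Your argument shows that every submetric space that is regular as a topological space is a regular submetric space. This is precisely the ``ideal'' statement that the remark before the proposition says could not be obtained.

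Your closing hedge is therefore unnecessary, and you can drop the fallback sketches for (i) and (ii). The one for (i) is too vague to stand as a proof in any case. The paper's approach, built around the range of a sequence with no convergent subsequence, is what forces the extra hypotheses; yours is shorter and strictly more general.
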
 
\bproof

	Suppose that $(\cX,\tau)$ is a submetric space, but not a regular submetric space.  Let $J$ be a relatively $\tau$-compact set such that $\overline{J}$ {\em is not} $\tau$-compact. By Corollary \ref{Cor:relcomp}, the closure $\overline{J}$ is not relatively $\tau$-compact. It follows that there exists a sequence $\{ x_n\} \subset \overline{J} \setminus J$, {\em no subsequence of which is $\tau$-convergent}. 
	
	First let us assume that $\tau$ is sequential; that is, a subset of $\cX$ is $\tau$-closed if it contains the limits of all its convergent sequences.   
	In particular, the set $F$ consisting of the elements of the sequence $\{x_{n}\}$ is closed (since no subsequence of this sequence converges), as is any subset of $F$. 
	
	Now, chose a compatible metric $d$ and note that  $\overline{J}$ is $\tau_d$-compact. Therefore there exists a subsequence $\{x_{n_k}\}$ such that $d(x_{n_k},x_0)\to 0$, for some $x_0 \in \overline{J}$. By eliminating repetitions, we may and do assume that $x_0 \not\in F$. 
	
	Suppose that $(\cX,\tau)$ is a regular topological space. Then one can find $\tau$-open sets $U$ and $V$ such that $\{x_n\} \subset U$, $x_0 \in V$ and $U \cap V = \emptyset$.  
	
	For each $k\in\GN$, let $x_k'$ be an element of $J$ satisfying $x_k' \in U$ and $d(x_{n_k},x_k') < 1/k$. Then $d(x_k',x_0) \to 0$ and $\{x_k'\}$ is relatively $\tau$-compact; hence $x_k' \intau x_0$, which implies $x_k' \in V$ for $k \geq k_0$. However, this is impossible due to $U\cap V =\emptyset$. It follows that the sequential submetric space $(\cX,\tau)$ that is not a regular submetric space cannot be a regular topological space.
	
	Next, let us assume that $(\cX,\tau)$ is a first-countable submetric space that is, once again, not a regular submetric space.
	
	Exactly as before, we can find a sequence $\{x_k\}$ with no  $\tau$-convergent subsequence, and a point $x_0 \in \overline{J}$ such that $d(x_k,x_0) \to 0$. If the set $F$ consisting of the elements of the sequence $\{x_k\}$ is $\tau$-closed, we can repeat the previous reasoning to deduce that $(\cX,\tau)$ cannot be regular. 
	
	Thus, let us assume that $F$ {\em is not} $\tau$-closed. Its closure $\overline{F}$ must be  $F \cup \{x_0\}$, as it cannot be larger than the $\tau_d$-closure. The fact that $x_0 \in \overline{F}$ implies (is equivalent to) that for every $\tau$-open set $V \ni x_0$, we have $V\cap F \neq \emptyset$. We then choose a countable neighborhood basis   $\{V_m(x_0)\}$ at $x_0$ and let $x_{k_m}$ be an element of $\{x_k\}$ that belongs to $\cap_{j=1}^m V_j(x_0)$. Clearly, $x_{k_m} \intau x_0$ which contradicts the defining property of $\{x_k\}$
\eproof  

\section{Back to the Hausdorff case: reversibly c\`adl\`ag functions}\label{Sec:Hausdorff}

Example \ref{Ex:Jans} (due to Janson \cite{Jan26}) disproved author's claim \cite[Proposition 1, stated without proof]{Jak86} that the extended set of values $K_x$ is compact for any c\`adl\`ag function $x:[0,T] \to (\cX,\tau)$ with values in a general Hausdorff space.
As announced in the Introduction, Janson \cite[Corollary 2.5]{Jan26} proved this result for regular topological spaces $(\cX,\tau)$. In his proof, he used so-called {\em split interval} (or {\em arrow space}) $\widehat{I}$, which is compact, totally disconnected, separable and first-countable topological space that is neither second-countable nor metrizable. See \cite[Chapter 9]{JaKa15} for more information on this fascinating space in the context of the Skorokhod space considered as a Banach algebra. 

Interestingly, filling a gap in the original (unpublished) proof of Proposition 1 in \cite{Jak86} leads to an elementary proof of a fact of independent interest, which remains valid in general Hausdorff spaces.    

\begin{definition} Let $(\cX,\tau)$ be a Hausdorff space and let $x: [0,T] \to \cX$ be a c\`adl\`ag function. Define 
	\begin{equation}\label{eq:revers}
		y_x(t) := x(t-),\quad \text{ if $t\in (0,T]$};\qquad y_x(0) = x(0).
	\end{equation}
	We will say that $x$ is {\em reversibly } c\`adl\`ag, if $y$ is c\`agl\`ad, i.e., left continuous on $[0,T]$ and with limits form the right at $t\in [0,T)$.
\end{definition}

\begin{theorem}\label{Th:revers} If $x$ is  reversibly c\`adl\`ag, then $K_x$ is $\tau$-compact in $\cX$.
\end{theorem}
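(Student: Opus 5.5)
We prove compactness directly from the definition: take an arbitrary open cover of $K_x$ and extract a finite subcover by a Heine--Borel style argument on $[0,T]$. No metric is available, since $(\cX,\tau)$ is only Hausdorff. Fix a family $\{U_i\}_{i\in\GI}$ of $\tau$-open sets covering $K_x$.

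For each $t\in[0,T]$ we attach an open interval $I_t\ni t$, relatively open in $[0,T]$, and at most three indices $i_t^-, i_t^0, i_t^+$. Choose $i_t^0$ with $x(t)\in U_{i_t^0}$. If $t>0$, choose $i_t^-$ with $x(t-)=y_x(t)\in U_{i_t^-}$. Right continuity of $x$ at $t$ gives $\delta>0$ with $x(s)\in U_{i_t^0}$ for $s\in[t,t+\delta)$.

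The key point is to control the left limits $x(s-)$ for $s$ near $t$ as well; this is where reversibility enters.
\begin{itemize}
\item On the right of $t$: $y_x$ has a right limit at $t$. For $s\downarrow t$ both $x(s)\to x(t)$ and $x(s-)\to x(t)$: for each $s$ in a small right neighbourhood, $x(s-)$ is the limit of values $x(u)$ with $u\uparrow s$, $u>t$, all of which lie in a prescribed neighbourhood of $x(t)$. Since the space is only Hausdorff, this does not automatically put $x(s-)$ in the same open set. However, $y_x(t+)$ exists by assumption and must equal $x(t)$: any neighbourhood of $y_x(t+)$ contains $x(s-)$ for all small $s>t$, hence contains values $x(u)$ with $u>t$ close to $t$, which converge to $x(t)$; Hausdorffness then forces equality. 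So $y_x(s)\in U_{i_t^0}$ for $s\in(t,t+\delta')$.
\item On the left of $t$: $x(s)\to x(t-)$ as $s\uparrow t$, and left continuity of $y_x$ gives $y_x(s)=x(s-)\to x(t-)$. Hence both $x(s)$ and $x(s-)$ lie in $U_{i_t^-}$ for $s\in(t-\delta',t)$.
\end{itemize}

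Thus on $I_t=(t-\delta',t+\delta')\cap[0,T]$, every point of $\{x(s),x(s-)\,;\,s\in I_t\}$ lies in $U_{i_t^-}\cup U_{i_t^0}$. At $s=t$, the point $x(t-)$ lies in $U_{i_t^-}$; at $s=0$ we have $y_x(0)=x(0)$.

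By compactness of $[0,T]$, finitely many intervals $I_{t_1},\dots,I_{t_m}$ cover $[0,T]$. The corresponding at most $2m$ sets $U_{i_{t_j}^-},U_{i_{t_j}^0}$ then cover $K_x$, since every element of $K_x$ is $x(s)$ or $x(s-)$ for some $s\in[0,T]$.

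The main obstacle is the identification $y_x(t+)=x(t)$, i.e.\ that left limits taken from the right of $t$ approach $x(t)$. This is exactly what fails in the Smirnov example, and it is the only place the reversibility hypothesis is used. I will prove it carefully with a Hausdorff separation argument: if $y_x(t+)=z\neq x(t)$, choose disjoint open sets $W\ni z$ and $W'\ni x(t)$. Then for small $s>t$ we have $x(s-)\in W$, so some $u\in(t,s)$ has $x(u)\in W$. But $x(u)\in W'$ for $u$ close to $t$, a contradiction.
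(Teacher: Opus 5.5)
Your proof is correct, but it takes a different route from the paper's.

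\textbf{The paper's route.} It calls an open interval ``good'' if $x$ maps it into a single $U_\alpha$. A sequence argument shows that the set $B\subset[0,T]$ not covered by good intervals is finite. The compact remainder $B_1$ is then covered by finitely many good intervals, which gives finitely many $U_\alpha$ covering $\{x(t)\,;\,t\in[0,T]\}$. Finally the paper reruns the whole procedure for the c\`agl\`ad function $y_x$ and merges the two finite families.

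\textbf{Your route.} You make a single Heine--Borel pass. Each $t$ gets a neighbourhood $I_t$ on which \emph{both} $x$ and $y_x$ take values in $U_{i_t^-}\cup U_{i_t^0}$. On the right of $t$ this uses right-continuity of $x$ and $y_x(t+)=x(t)$; on the left it uses existence of $x(t-)$ and left-continuity of $y_x$. A finite subcover of $[0,T]$ then finishes the proof.

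\textbf{Comparison.} Your version dispenses with the finiteness lemma for $B$ and with the second pass. It also proves the identity $y_x(t+)=x(t)$, which the paper simply asserts (``plus some $y(t+)=x(t)$''). The paper's two-pass structure, in turn, shows that for an arbitrary c\`adl\`ag $x$ the values $x(t)$ are covered by finitely many members of any open cover of $K_x$. So reversibility is needed only to capture the left limits.

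\textbf{Corrections to your closing commentary (not to the proof).} First, $y_x(t+)=x(t)$ is not the only place reversibility is used. Your left-hand step uses left-continuity of $y_x$, the other half of the hypothesis, and this can fail in non-regular Hausdorff spaces. Second, in Janson's example what fails is the \emph{existence} of $y_x(0+)$: the points $x(1/n-)=1/n\in Z$ do not converge to $0$ in the Smirnov topology. Once such a right limit exists, its identification with $x(t)$ is automatic in a Hausdorff space, exactly as your separation argument shows.
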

\bproof
	For the sake of brevity, let us extend functions $x$ and $y_x$ to $\GR^1$ in a natural way
	\begin{align*}
		x(t) &= x(0),\quad \text{if $t < 0$};\quad x(t) = x(T), \quad \text{if $t > T$};\\
		y_x(t) &= y(0),\quad \text{if $t < 0$};\quad y_x(t) = y(T), \quad \text{if $t > T$}.
	\end{align*}
	Notice that the definition of $K_x$ is 
	independent of whether we consider $x$ as a function on $[0,T]$ or its extension to $\GR^1$.
	
	Let us consider an open cover of the set $K_x$:
	\begin{equation}\label{aje1}
		K_x \subset \bigcup_{\alpha\in A} U_{\alpha},
	\end{equation}
	We will find a finite subcover of $K_x$.
	
	We shall say that an {\em open} interval $I \subset \GR^1$ is ``good", if one can find $U_{\alpha}$ such that
	\[ \{x(t)\,;\, t \in I\} \subset U_{\alpha}.\]
	Let $S$ be the sum of all ``good" intervals. By the definition of the extended $x$, $(-\infty,0) \subset S$, $(T,+\infty) 
	\subset S$.
	
	Let us consider the set 
	\[ B = [0,T] \setminus S.\]
	We shall prove 
	\begin{lemma}\label{Lem1}
		$B$ is at most finite. 
	\end{lemma}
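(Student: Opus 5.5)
We argue by contradiction. Suppose $B$ is infinite. Then we can pick a sequence of distinct points $t_n\in B$ and, passing to a subsequence, assume it is strictly monotone with $t_n\to t_0\in[0,T]$. The two cases (increasing and decreasing) are handled symmetrically. In each case we will produce a single open interval that is ``good'' and contains $t_n$ for all large $n$, contradicting $t_n\notin S$.

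\emph{Case 1: $t_n\uparrow t_0$ strictly.} The natural point to look at is $x(t_0-)=y_x(t_0)\in K_x$. It lies in some $U_\alpha$. Since $x(t)\to x(t_0-)$ as $t\uparrow t_0$, there is $\delta>0$ with $x(t)\in U_\alpha$ for all $t\in(t_0-\delta,t_0)$. The open interval $(t_0-\delta,t_0)$ is then good, and it contains $t_n$ for large $n$. This contradicts $t_n\in B$. So this case uses only the existence of left limits of $x$ and does not need reversibility.

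\emph{Case 2: $t_n\downarrow t_0$ strictly.} Right-continuity of $x$ at $t_0$ gives, for $x(t_0)\in U_\alpha$, some $\delta>0$ with $x(t)\in U_\alpha$ for $t\in[t_0,t_0+\delta)$. Hence $(t_0,t_0+\delta)$ is good and contains $t_n$ for large $n$. Again this contradicts $t_n\in B$. If $t_0=T$ the case cannot occur, and if $t_0<T$ the argument goes through as stated.

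Both cases lead to a contradiction, so $B$ is finite. The main point to check carefully is that goodness is defined by open intervals, and the points $t_n$ lie \emph{inside} such intervals because the approach is strict. The limit point $t_0$ itself may remain in $B$, and that is consistent with $B$ being finite. The extension to $\GR^1$ ensures the endpoints $0$ and $T$ cause no trouble: if $t_0=0$, only Case 2 can occur. I do not expect reversibility to be used in this lemma. Presumably it enters later, when the points of $B$ and the compactness of the remaining pieces of $[0,T]$ are handled to cover the values $x(t-)$. There the behaviour of $y_x$ from the right becomes relevant, since a neighbourhood of $x(s-)$ must capture $x(t-)$ for $t$ slightly above $s$.
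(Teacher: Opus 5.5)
Your proof is correct and is essentially the paper's own argument. You extract a subsequence converging to $t_0$ from one side. For approach from the left, the left limit $x(t_0-)$ yields a good interval $(t_0-\delta,t_0)$; for approach from the right, right-continuity at $t_0$ yields a good interval $(t_0,t_0+\varepsilon)$. Either way the tail lies in $S$, a contradiction. You are also right that reversibility is not used here: the paper invokes it only afterwards, by rerunning the covering argument for $y_x$.
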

	
	\bproof[Proof of Lemma \ref{Lem1}]
		Suppose the contrary, i.e. one can find in $B$ 
		a sequence $t_1, t_2,\ldots$ of {\em distinct} points. Since $B \subset [0,T]$, there exists 
		a convergent subsequence $\{t_{n_k}\}$:
		\[ t_{n_k} \to t_0 \in [0,T],\ \text{ as } k\to\infty.\]
		Since $t_n$'s are distinct, at most one is equal to $t_0$ and so for $k \geq k_0$
		the subsequence $\{t_{n_k}\}$ can be split into two subsequences: $\{t_{n_k} > t_0\}$ 
		and $\{t_{n_k} < t_0\}$ (one of them may be empty). This means that the first subsequence
		\[ t_{n_{k'}} \to t_0 +,\]
		while the second 
		\[ t_{n_{k''}} \to t_0 -.\]
		
		In the first case, $t_0 < 1$  and by (\ref{aje1}), there exists $\alpha \in A$ such that $x(t_0) \in U_{\alpha}$. 
		By the right continuity of $x$ at $t_0$ there exists $\varepsilon > 0$ such that
		\[ x(s) \in U_{\alpha}, \ s \in [t_0,t_0+\varepsilon).\]
		Hence $(t_0,t_0+\varepsilon)$ is a ``good'' interval. 
		But for sufficiently large $k'$, 
		$t_{n_{k'}} \in (t_0,t_0+\varepsilon) \subset S$, what is a contradiction, since $\{t_n\} \subset [0,T] \setminus S$.
		
		In the second case, $t_0 > 0$ and by (\ref{aje1}), there exists   
		$\beta \in A$ such that $x(t_0-) \in U_{\beta}$. 
		By the existence of the limit from the left at $t_0$, there exists $\delta > 0$ such that
		\[ x(s) \in U_{\beta}, \ s \in (t_0 - \delta, t_0).\]
		Hence $(t_0-\delta, t_0)$ is a ``good'' interval. 
		But for sufficiently large $k''$, 
		$t_{n_{k''}} \in (t_0-\delta,t_0) \subset S$, what is a contradiction, since $\{t_n\} \subset [0,T] \setminus S$.
		
		Hence $B$ is finite.
	\eproof
	
	\noindent{\em Proof of Theorem \ref{Th:revers} - continued.}
	
	By Lemma \ref{Lem1}, let  $B = \{ t_1, t_2, \ldots, t_m\}$.
	By (\ref{aje1}), for every $j=1,2,\ldots,m$ there exist: $\alpha_j,\beta_j \in A$ and 
	$\varepsilon_j > 0, \delta_j > 0$ such that
	\begin{eqnarray*}
		x(t_j) \in U_{\alpha_j},&& x(s) \in U_{\alpha_j}, s\in (t_j,t_j + \varepsilon_j),\\
		x(t_j-) \in U_{\beta_j} && x(s) \in U_{\beta_j}, s\in (t_j-\delta_j,t_j).
	\end{eqnarray*}
	
	Let us consider 
	\[ B_1 = [0,T] \setminus \Big( \bigcup_{j=1}^{m} (t_j-\delta_j,t_j+\varepsilon_j)\Big) \subset
	[0,T] \setminus B \subset S.\]
	$B_1$ is compact, hence we can find a finite set of ``good'' intervals $(a_l,b_l), \ l=1,2,\ldots, q$ and 
	$\gamma_1,\gamma_2,\ldots,\gamma_q \in A$ such that
	\[ x(s) \in U_{\gamma_l},\ s\in (a_l,b_l), l = 1,2, \ldots, q,\]
	and
	\[ B_1 \subset \bigcup_{l=1}^{q} (a_l,b_l).\]
	
	Hence the sum 
	\[ \bigcup_{j=1}^m U_{\alpha_j} \cup \bigcup_{j=1}^{m} U_{\beta_j} \cup \bigcup_{l=1}^{q} U_{\gamma_l}
	\]
	provides a finite covering of $\{ x(t)\,;\, t\in [0,T]\}$ {\em plus}, perhaps,  some $x(t-)$. Example \ref{Ex:Jans} shows that some $x(t-)$ may be not included in this subcover.
	Therefore we repeat the procedure for $y_x(t)$ and get  another finite covering 
	\[ \bigcup_{u=1}^{m'} U_{\alpha_u'} \cup \bigcup_{u=1}^{m'} U_{\beta_u'} \cup \bigcup_{v=1}^{q'} U_{\gamma_v'}
	\]
	of $\{ y(t)\,;\,t\in [0,T]\}$ plus some $y(t+) = x(t)$. Merging these two families we obtain a finite cover of $K_x$.
\eproof

\begin{example}[Example \ref{Ex:Jans} revisited]
	Janson's example shows that not all c\`adl\`ag functions are reversibly c\`adl\`ag.
\end{example}

\begin{corollary}
	If $(\cX,\tau)$ is a completely regular space and $x$ is a c\`adl\`ag function with values in $\cX$, then
	$x$ is reversibly c\`adl\`ag. In particular, the set $K_x$ is $\tau$-compact.
\end{corollary}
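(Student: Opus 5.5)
The corollary asks that, in a completely regular space, every c\`adl\`ag $x$ is reversibly c\`adl\`ag, i.e. that $y_x$ is left continuous on $(0,T]$ and has right limits at every $t\in[0,T)$. The second claim then follows at once from Theorem \ref{Th:revers}. The plan is to verify both properties of $y_x$ using only regularity of $(\cX,\tau)$; complete regularity is more than we need.

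\emph{Right limits of $y_x$.} Fix $t\in[0,T)$. I claim $y_x(s)\to x(t)$ as $s\downarrow t$. Take an open $U\ni x(t)$. By regularity there is an open $V$ with $x(t)\in V\subset \overline{V}\subset U$. Right continuity of $x$ at $t$ gives $\varepsilon>0$ with $x(u)\in V$ for all $u\in[t,t+\varepsilon)$. For $s\in(t,t+\varepsilon)$ we have $y_x(s)=x(s-)=\lim_{u\uparrow s}x(u)$, and the $x(u)$ with $u\in(t,s)$ all lie in $V$. A limit of a net lying in $V$ belongs to $\overline{V}$, so $y_x(s)\in\overline V\subset U$. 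Hence $y_x(t+)=x(t)$.

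\emph{Left continuity of $y_x$.} Fix $t\in(0,T]$. I claim $y_x(s)\to x(t-)=y_x(t)$ as $s\uparrow t$. Take an open $U\ni x(t-)$ and choose $V$ with $x(t-)\in V\subset\overline V\subset U$. The left limit gives $\delta>0$ with $x(u)\in V$ for $u\in(t-\delta,t)$. For $s\in(t-\delta,t)$ with $s>0$, $y_x(s)=\lim_{u\uparrow s}x(u)$ is a limit of points $x(u)\in V$ with $u\in(t-\delta,s)$, so $y_x(s)\in\overline V\subset U$. Both arguments use that limits are unique (Hausdorff) and that closures of $V$ capture one-sided limits.

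The only delicate point is the closure step: passing from ``$x(u)\in V$ for all nearby $u$'' to ``$y_x(s)\in\overline V$''. This is exactly where regularity enters, and it is what fails in Janson's Example \ref{Ex:Jans}. It holds because $\overline V$ is closed and contains a tail of the approximating net, and it needs no countability assumptions. With both properties established, $y_x$ is c\`agl\`ad, so $x$ is reversibly c\`adl\`ag, and Theorem \ref{Th:revers} gives that $K_x$ is $\tau$-compact.
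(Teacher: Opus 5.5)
Your proof is correct, but it takes a different route from the paper's.

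The paper uses the representation of a completely regular topology by a directed family of pseudometrics $\{d_i\}$ and argues with sequences. For $t_n\nearrow t_0$ and a fixed $d_i$, it picks $t_n'\in(t_n-1/n,t_n)$ with $d_i(x(t_n'),x(t_n-))<1/n$ and concludes by the triangle inequality. It leaves the right limits to ``a similar way''.

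You replace the pseudometric triangle inequality with the closed-neighbourhood form of regularity: a one-sided limit of points lying in $V$ stays in $\overline V\subset U$. The underlying idea is the same. In the paper's proof, a $d_i$-ball of radius $r$ plays the role of your $V$, and the ball of radius $2r$ plays the role of $U$.

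Your version has several advantages:
\begin{itemize}
\item It needs only regularity, plus Hausdorffness, which is already the standing assumption for Theorem \ref{Th:revers}.
\item It avoids the uniformity machinery and the reduction to sequences.
\item It handles the right limits explicitly and identifies $y_x(t+)=x(t)$.
\end{itemize}

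As a result, combined with Theorem \ref{Th:revers}, your argument gives a proof of Janson's theorem for regular spaces \cite[Corollary 2.5]{Jan26} that avoids the split interval. This is strictly more than the corollary as stated. The paper's proof, by contrast, stays within the metric-style sequential reasoning used throughout the rest of the paper.
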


\bproof
	Let us recall that a completely regular topology $\tau$ on $\cX$ is generated by family of  pseudometrics $\{d_i\}_{i\in\GI}$, separating points (i.e., if $a,b \in \cX, a \neq b $ then $d_i(a, b) > 0$ for some $i\in\GI$) and is directed (i.e., for any pair $d_i$, $d_j$ there is a $d_k$ such that $\max(d_i,d_j) \leq d_k$). In particular, the convergence of sequences in $(\cX,\tau)$ is equivalent to convergence in each pseudometric $d_i$.
	See, e.g., \cite[Theorem 8.1.20]{Eng89}.
	
	Let $x: [0,T] \to \GR^1$ be a c\'adl\`ag function and consider $y_x$ given by  (\ref{eq:revers}). Let $t_n \nearrow t_0 \in (0,T]$. We want to prove  the left-continuity  of $y_x$, that is, 
	\[ y_x(t_n) = x(t_n-) \intau y_x(t_0) = x(t_0-).\]
	Fix a pseudometric $d_i$ and choose $t_n'$ such that $t_n -1/n < t_n' < t_n$ and $d_i(x(t_n'), x(t_n-)) < 1/n$. Then
	\[ d_i( y_x(t_n), y_x(t_0)) \leq d_i(x(t_n-),x(t_n'))  + d_i( x(t_n'), x(t_0-))\to 0.
	\]
	In a similar way, we can verify that $y_x$ admits limits from the right.
\eproof



\begin{thebibliography}{99}
	
	\bibitem{JAR26}
	Abi Jaber, E., Attal, E. and Rosenbaum, M.: From hyper roughness to jumps as $H\to -1/2$. \emph{Ann. Appl. Probab.}, {\bf 36 No. 4}, (2026), 3635--3660.
	
	\bibitem{CKPS21}
	Cheridito, P.,  Kiiski, M., Pr{\"{o}}mel, D.J. and  Soner, H.M.:
	Martingale optimal transport duality, \emph{Math. Ann.}, {\bf 379, No. 3-4}, (2021), 1685--1712.
	
	\bibitem{DunSch58}
	Dunford, N. and Schwartz, J.T.,(With the assistance of W.B. Bade and R.G. Bartle): {\bf Linear Operators. Part I. General theory}, Interscience Publishers, New York 1958.
	
	\bibitem{Eng89}
	Engelking, R.: {\bf General Topology}. Helderman, Berlin 1989.
	
	\bibitem{Gru84}
	Gruenhage, G.: Generalized metric spaces, in: K. Kunen \& J.E.~Vaughan, Eds.,  {\bf Handbook of set-theoretic topology}, North-Holland, Amsterdam
	1984, 423--502.
	
	
	\bibitem{Jak86} Jakubowski, A.: On the Skorokhod Topology. \emph{Ann. Inst. H. Poincar{\'e} Probab. Statist.} \textbf{22}, (1986), 263--285.
	
	\bibitem{Jak97a} Jakubowski, A.: The almost sure Skorokhod representation for subsequences in nonmetric spaces. \emph{Teor. Veroyatnost. i Primenen.}
	\textbf{42}, (1997), 209--216. (English transl.: {\em Theory Probab. Appl.} \textbf{42}, (1997), 167--174.) 
	
	\bibitem{Jak97b}
	Jakubowski, A.: A non-Skorohod topology on the Skorohod space. \emph{ Electr. J. Prob.} \textbf{2}, (1997), No 4, 1-21. 
	
	\bibitem{Jak00}
	Jakubowski, A.: From convergence of functions to convergence of stochastic processes. On Skorokhod's sequential approach to convergence in distribution, in: V. Korolyuk, N. Portenko \& H. Syta, Eds.: {\bf Skorokhod's Ideas in Probability Theory}. \emph{Insitute of Mathematics, National Academy of Sciences of Ukraine}, Kyiv 2000, 179--194. 
	
	\bibitem{Jak18}
	Jakubowski, A.: New characterizations of the S topology on the Skorokhod space, \emph{Electron. Commun. Probab.}, \textbf{23}, (2018), No.~2, 1--16.
	
	
	\bibitem{Jak23} Jakubowski, A.: Probability on submetric spaces, \emph{Ann. Math. Sil.} \textbf{37}, (2023), 138--148.
	
	\bibitem{Jar81}
	Jarchow, H.: {\bf Locally convex spaces}, B.G. Treubner, Stuttgart 1981. 
	
	\bibitem{Jan26} Janson, S.: On the Skorohod topology for functions with values in a completely regular space. \emph{Theor. Probability and Math. Statist.} {\bf 114} (2026)
	DOI: https://doi.org/10.1090/tpms/1254\ ArXiv: {2511.0911}.
	
	\bibitem{JaKa15}
	Janson, S. and Kaijser, S.: Moments of Banach Space Valued Random Variables. \emph{Mem. Amer. Math. Soc.}
	{\bf 238} (2015), no. 1127.
	
	\bibitem{LiNe19}
	Liu, C. and Neufeld, A.: Compactness criterion for semimartingale laws and semimartingale optimal transport. \emph{Trans.
		Amer. Math. Soc.}, {\bf 372, No. 1} (2019), 197--231.
	
	\bibitem{Sch71} 
	Schaefer, H.H.: {\bf Topological Vector Spaces},  Springer, New York, 1971.
	
	\bibitem{Smir51}
	Smirnov, Y.M.: On the metrization of topological spaces.
	Usp. {M}at. {N}auk. \textbf{6, No 6 (46)}, (1951), 100--111.
	
	\bibitem{SmFo76}
	Smolyanov, O., and Fomin, S.V.: Measures on linear topological spaces. \emph{Russ. Math. Surveys} \textbf{31}, (1976), 1--53.
	
\end{thebibliography}


\end{document}